\documentclass[11pt,a4paper,reqno]{amsart}

\usepackage{amsmath,amssymb,enumitem, verbatim,stmaryrd,xcolor,microtype,graphicx,aliascnt,fancyvrb,mathtools}
\usepackage{mathptmx}
\usepackage{tikz-cd}
\usepackage{tikz-3dplot}

\usepackage[T1]{fontenc}
\usepackage[utf8]{inputenc}
\usepackage[normalem]{ulem}
\usepackage[english]{babel} 
\usepackage[top=3.5cm,bottom=3.5cm,left=3.2cm,right=3.2cm]{geometry}
\usepackage[bookmarksdepth=2,linktoc=page,colorlinks,linkcolor={blue!80!black},citecolor={red!80!black},urlcolor={blue!80!black},pdftitle={Title},pdfauthor={author}]{hyperref}

\usepackage{tikz}\usetikzlibrary{matrix,arrows,decorations.markings}
\usepackage{tikz-cd}
\usepackage{enumitem}
\usepackage[dvipsnames]{xcolor}

\allowdisplaybreaks

\usepackage[mathscr]{euscript}           
\usepackage{mathptmx}  
\usepackage[skip=4pt plus1pt, indent=13pt]{parskip}
\usepackage{etoolbox}\makeatletter\patchcmd{\@startsection}{\@afterindenttrue}{\@afterindentfalse}{}{}\makeatother   
\patchcmd{\section}{\scshape}{\bfseries}{}{}\makeatletter\renewcommand{\@secnumfont}{\bfseries}\makeatother 
\usepackage[backgroundcolor=orange!30!white,linecolor=orange!80!white,textsize=footnotesize]{todonotes} \makeatletter \providecommand \@dotsep{5} \def\listtodoname{List of Todos} \def\listoftodos{\@starttoc{tdo}\listtodoname} \makeatother 

\addto\extrasenglish{         }
\theoremstyle{plain}
\newtheorem{thm}{Theorem}[section] 
\newaliascnt{lemma}{thm}\newtheorem{lemma}[lemma]{Lemma}\aliascntresetthe{lemma}
\newaliascnt{cor}{thm}\aliascntresetthe{cor}
\newaliascnt{prop}{thm}\newtheorem{prop}[prop]{Proposition}\aliascntresetthe{prop}
 
\newaliascnt{propA}{thmA}\aliascntresetthe{propA}

\newtheorem*{thm*}{Theorem}
\newtheorem*{lem*}{Lemma}
\newtheorem*{cor*}{Corollary}
\newtheorem*{problem*}{Problem}

\theoremstyle{definition}
\newaliascnt{definition}{thm}\newtheorem{definition}[definition]{Definition}\aliascntresetthe{definition}
\newaliascnt{rem}{thm}\newtheorem{rem}[rem]{Remark}\aliascntresetthe{rem}
\newaliascnt{question}{thm}\aliascntresetthe{question}
\newaliascnt{example}{thm}\newtheorem{example}[example]{Example}\aliascntresetthe{example}

\newtheorem*{df*}{Definition}
\newtheorem*{ex*}{Example}
\newtheorem*{rem*}{Remark}
\usepackage{etoolbox}
\makeatletter
\patchcmd{\@startsection}{\@afterindenttrue}{\@afterindentfalse}{}{}            
\patchcmd{\part}{\bfseries}{\bfseries\LARGE}{}{}
\patchcmd{\section}{\scshape}{\bfseries}{}{}\renewcommand{\@secnumfont}{\bfseries} 
\patchcmd{\@settitle}{\uppercasenonmath\@title}{\large}{}{}
\patchcmd{\@setauthors}{\MakeUppercase}{}{}{}
  
\addto{\captionsenglish}{} 
\addto{\captionsenglish}{} 
\makeatother

\usepackage{fancyhdr}

\DeclareRobustCommand{\gobblefour}[5]{}    

\DeclareFontFamily{OT1}{pzc}{}    
\DeclareFontShape{OT1}{pzc}{m}{it}{<-> s * [1.10] pzcmi7t}{}
\DeclareMathAlphabet{\mathpzc}{OT1}{pzc}{m}{it}
\DeclareSymbolFont{sfoperators}{OT1}{bch}{m}{n} \DeclareSymbolFontAlphabet{\mathsf}{sfoperators} \makeatletter\def\operator@font{\mathgroup\symsfoperators}\makeatother
\DeclareSymbolFont{cmletters}{OML}{cmm}{m}{it}              
\DeclareSymbolFont{cmsymbols}{OMS}{cmsy}{m}{n}
\DeclareSymbolFont{cmlargesymbols}{OMX}{cmex}{m}{n}
\DeclareMathSymbol{\myjmath}{\mathord}{cmletters}{"7C}     \let\jmath\myjmath

\DeclareMathOperator{\im}{im}

\newcommand\BC{{\mathbb C}}

\newcommand\BLP{\mathbb{LP}}
\newcommand\BK{{\mathbf K}}

\newcommand\Bn{{\mathbf n}}
\newcommand\BP{{\mathbb P}}

\newcommand\BR{{\mathbb R}}
\NewCommandCopy{\Sec}{\S}
\renewcommand\S{{\mathbb S}}

\newcommand\BZ{{\mathbb Z}}

\newcommand\bF{{\mathbf F}}
\newcommand\bI{{\mathbf I}}

\newcommand\bP{{\mathbf P}}
\newcommand\bR{{\mathbf R}}

\newcommand\bk{{\mathbf{k}}}

\newcommand\cM{{\mathcal M}}

\newcommand\cO{{\mathcal O}}

\renewcommand\int{\textup{int}}

\renewcommand\geq{\geqslant}
\renewcommand\leq{\leqslant}

\title{On the Hilbert polynomial of the linked projective space}

\author[]{FELIPE DE LEÓN}
\address{Universidad de Santiago de Chile (USACH), Avenida Libertador Bernardo O'Higgins no. 3363, Estaci\'{o}n Central, Santiago, Chile}
\email{felipe.saenz@usach.cl}

\author[]{EDUARDO ESTEVES}
\address{Instituto de Matemática Pura e Aplicada (IMPA), Estrada Dona Castorina 110, 22460-320 Rio de Janeiro RJ, Brazil}
\email{esteves@impa.br}

\author[]{EDUARDO VITAL}
\address{Universit\"at Bielefeld, 
Universit\"atsstra{\ss}e 25, 33615 Bielefeld, Germany}
\email{evital@math.uni-bielefeld.de}

\begin{document}

\begin{abstract}

Linked projective spaces are quiver Grassmannians of subspaces of dimension $1$ of certain quiver representations. Degenerations of linear series produce these representations, with the limit divisors parameterized by the associated linked projective spaces. It is not known whether all linked projective spaces arise this way. If they do, they are degenerations of the (small) diagonal in a product of projective spaces. In any case, we prove here that they have the (multivariate) Hilbert polynomial of the diagonal. To achieve this, we give first a formula for the Hilbert polynomial of (simple) normal-crossings schemes with multiplicity-free strata in a product of projective spaces, more general and simpler than that found by Castillo et al. Then we prove that a linked projective space is normal-crossings, by describing it locally in terms of  Mustafin varieties. Finally, we use a relation between intersections of components of the linked projective space and certain polytopes in the tiling of a simplex associated to the linked net to prove we may apply our formula for the Hilbert polynomial. 
\end{abstract}

\maketitle

\section{Introduction}\label{sec:introduction}

Linked projective spaces arise naturally in the study of degenerations of linear series. The classical theory of limit linear series provides a way to retain partial linear-series data when a smooth curve specializes to a reducible one \cite{Eisenbud1986}. Subsequent constructions make clear that the full linear-series data is necessary to describe the limiting divisors themselves \cite{EstevesOsserman2013}.  In the framework developed by Santos and the second-named and third-named authors, building on ideas by Osserman \cite{Osserman2019}, applied to specializations of varieties of any dimension, full data are encoded by certain quiver representations, called linked nets \cite{Esteves22072025,Esteves2025}. The quiver Grassmannian parameterizing one-dimensional subrepresentations of a linked net is called its \emph{linked projective space}.  When a linked net comes from a degenerating family of linear series, its associated linked projective space parameterizes the schematic limits of divisors in the family \cite{Esteves2025}. Thus linked projective spaces provide a bridge between the geometry of degenerations and the representation theory of quivers.

There is a second bridge, this time to non-Archimedean and tropical
geometry.  Mustafin varieties are degenerations of projective space attached
to finite configurations of lattices in a Bruhat--Tits building.  Their
geometry reflects both the convexity of the lattice configuration and the
combinatorics of tropical convexity
\cite{Faltings2001,Cartwright2011}.  Quiver-theoretic descriptions of
degenerations associated to lattice configurations were developed further
by Hahn and Li \cite{HahnLi2021}, and He and Zhang \cite{10.1093/imrn/rnab241,He_Zhang_2025}. One of our first observations
is that these two classes of objects meet more closely than might initially
be expected: a linked projective space generated by a polygon is a Mustafin
variety associated to an explicit convex lattice configuration, and an
arbitrary linked projective space is locally of this form.  This gives local
models for linked projective spaces and, in particular, shows that they are
simple normal crossings, our \autoref{thm:linked-projective-space-snc}.

The intersection theory of linked projective spaces has recently revealed
a third connection, with polymatroids.  To a linked net one can associate a
collection of base polytopes which tile a simplex; the faces of this tiling
are indexed by polygons in the generating set
\cite{Chow.class}. 
This description was used in loc.~cit.~to prove that a linked projective space has the Chow
class of the small diagonal.  The result is striking
because it says that, at the level of multidegrees, a generally reducible
linked projective space behaves like a single projective space embedded
diagonally in a product.  It leaves open, however, whether every linked
projective space is actually a flat degeneration of that diagonal.

In this paper we use the tiling again to strengthen the equality of Chow classes to an equality of multivariate Hilbert polynomials. To do this, we use recent work by Castillo et al.~\cite{castillo2025kpolynomialsmultiplicityfreevarieties}, who proved that the Chow class determines the Hilbert polynomial for multiplicity-free (irreducible) varieties. Not only do we extend this result to  simple normal-crossings schemes whose nonempty strata are multiplicity-free, but also we give a simpler formula for the Hilbert polynomial, in terms of the lattice-point sets of the independence polytopes of the irreducible components of the schemes.  

We describe now our main result in more detail. Let $\mathfrak V$ be a
nontrivial exact finitely generated linked net of vector spaces of dimension
$r+1$ over a $\BZ^n$-quiver, and let $H$ be its minimum set of generators; see \autoref{sec:quivers-mustafin-linked-nets}.
The linked projective space
\[
    \BLP(\mathfrak V)\subseteq\BP_H\coloneqq\prod_{u\in H}\BP(V_u)
\]
inherits the natural $H$-grading from the product.  If
$\Bn=(n_u)_{u\in H}$, our main theorem states that
\[
    P_{\BLP(\mathfrak V)}(\Bn)
    =\binom{r+\sum_{u\in H}n_u}{r}.
\]
The right-hand side is precisely the multivariate Hilbert polynomial of the
small diagonal
\[
    \begin{tikzcd}
        \BP^r \arrow[r, hook] & (\BP^r)^H.
    \end{tikzcd}
\]
In particular, the Hilbert polynomial is
independent of the quiver and of the individual maps in $\mathfrak V$.
This is \autoref{thm:multivariate-hilbert-polynomial}.

We briefly describe the mechanism behind its proof. First, we improve on results in \cite{castillo2025kpolynomialsmultiplicityfreevarieties}, by expressing the Hilbert polinomial of a multiplicity-free subvariety $Y\subseteq\BP_H$ as a sum indexed by the lattice points of the independence polytope $D_{Y}$ of its polymatroidal rank function $\varphi_Y$, our  \autoref{prop:multiplicity-free-hilbert-polynomial}:
\begin{equation}\label{eq:poly-formula}
    P_{Y}(\Bn) =\sum_{a\in D_Y}
    \prod_{u\in H}\binom{n_u+a_u-1}{a_u}.
\end{equation}

The subvariety of interest to us is the scheme-theoretic image $Y_W$ of the rational map
\[
    \begin{tikzcd}
        \BP(W) \arrow[r, dashed] & \displaystyle\prod_{u\in H}\BP(V_u)
    \end{tikzcd}
\] 
associated to a vector subspace $W\subseteq V_H\coloneqq\bigoplus_{u\in H}V_u$. In this case, we prove that $\varphi_{Y_W}$ is the Dilworth truncation of $\rho_W-1$, where $\rho_W$ is the polymatroidal rank function of $W$, our \autoref{prop:projection-rank-dilworth-truncation}, and use this to give a geometric characterization of $D_W\coloneqq D_{Y_W}$ in terms of the base polytope $\bP_W$ of $\rho_W$, our \autoref{prop:interior-criterion}.  Images of rational maps of projective spaces are multiplicity-free by \cite{Li_2018}, so our formula can be applied, yielding \autoref{prop:general-vector-space-hilbert-polynomial}. 

We apply our formula to the strata of a linked projective space $X\coloneqq\BLP(\mathfrak V)$.  As shown in \cite{Chow.class,Esteves2025}, the irreducible components of $X$ are indexed by the vertices of $H$, while their nonempty intersections $X_\Delta$ are indexed by polygons $\Delta\subseteq H$.  Furthermore, $X_\Delta$ is the scheme-theoretic image of the rational map given by a certain vector space $W_\Delta\subseteq V_H$ associated to $\Delta$. Hence $X_\Delta$ is multiplicity-free, and thus  
\[
P_{X_\Delta}(\Bn)
=\sum_{a\in D_\Delta}
\prod_{u\in H}\binom{n_u+a_u-1}{a_u},
\]
where $D_{\Delta}$ is the independence polytope of $X_\Delta$. 

The local Mustafin description implies that $X$ has simple normal crossings, so its Hilbert polynomial is obtained from those of the strata by inclusion--exclusion.  At first, this produces an alternating sum over all polygons, which can then be written by \autoref{prop:normal-crossings-euler-cancellation} as
\[
P_X(\Bn)=\sum_{a\in\BZ_{\geq0}^H}\chi(K_a)\prod_{u\in H}\binom{n_u+a_u-1}{a_u},
\]
where $\chi(K_a)$ is the Euler characteristic of the simplicial complex $K_a$ formed by all polygons $\Delta$ with $a\in D_{\Delta}$. 
The polymatroidal tiling in \cite{Chow.class} turns that Euler characteristic into a topological one: It follows from the geometric characterization of the $D_\Delta$ that 
for each lattice point $a$, either $K_a$ is trivial, in which case $|a|>r$, or $K_a$ is the nerve of a cover of a simplex by closed convex subsets, in which case Nerve Theorem \cite{BauerKerberRollRolle_2023} yields that $K_a$ is homotopically equivalent to the simplex, whence contractible. Thus $\chi(K_a)=1$ for each $a$ with $|a|\leq r$. 

Then 
another application of the tiling in \cite{Chow.class}, 
followed by Multivariate Chu--Vandermonde Identity \cite{Vignat_Moll_2015} and the 
hockey-stick identity reduces the
resulting sum to the single binomial coefficient in \autoref{thm:multivariate-hilbert-polynomial}.

Our argument also explains why the conclusion is stronger than equality
of Chow classes.  The Chow class records the terms of top total degree in
the multivariate Hilbert polynomial, whereas the formula above controls all
of its coefficients.  On the other hand, equality of Hilbert polynomials is
still weaker than constructing a flat family connecting
$\BLP(\mathfrak V)$ to the diagonal; cf.~\cite{CS2010}.  The latter remains the most immediate
open problem suggested by the theory:
\begin{quote}
    Is every linked projective space a flat degeneration of the small diagonal?
\end{quote}
A positive answer would give a geometric explanation for both the Chow-class
calculation and the Hilbert-polynomial formula.  It would also be useful to
determine whether such a degeneration can be chosen compatibly with the
quiver data or constructed directly from the polymatroidal tiling.

Several related directions remain open.  First, the Hilbert polynomial does
not determine the multigraded coordinate ring.  It is natural to ask for a
description of the multivariate Hilbert series, the $K$-polynomial, and the
minimal free resolution of a linked projective space, and to determine which
of these finer invariants are still independent of the linked net.  Second,
one may replace one-dimensional subrepresentations by subrepresentations of
higher constant dimension.  The resulting linked Grassmannians should be
compared with higher-rank Mustafin degenerations, but an analogue of Formula~\eqref{eq:poly-formula} proved here is not presently available.  
Finally, the proof suggests a broader combinatorial question:
which polyhedral subdivisions admit an inclusion--exclusion calculation of
their Hilbert polynomial governed solely by the relative Euler
characteristics of their cells?  Progress on these questions may clarify
which features of linked projective spaces belong to a more general interaction between quiver
Grassmannians, tropical convexity, and polymatroid geometry.

The paper is organized as follows.  In \autoref{sec:base-polytopes} we
review modular pairs, base and independence polytopes, Dilworth truncation,
and the Hilbert polynomials of images of rational maps.  In
\autoref{sec:quivers-mustafin-linked-nets} we recall linked nets and Mustafin
varieties, construct the Mustafin model associated to a polygon, and prove
the simple-normal-crossings property.  Finally, in
\autoref{sec:hilbert-polynomial} we identify the strata with
polymatroid-controlled images and prove the main theorem by
inclusion--exclusion.

\medskip

{\noindent\bf Acknowledgments.}  E.~Vital was funded by the Deutsche Forschungsgemeinschaft (DFG, German Research Foundation) – Project-ID 491392403 – TRR 358. F.~de León was supported by DICYT Project Aplicabilidad del algoritmo de Shor en Jacobianas de curvas singulares, Code 042632TSSA\_Ayudante, Vicerrectoría de Investigación, Innovación y Creación, Universidad de Santiago de Chile. We thank Omid Amini and Kris Shaw for discussions on the subject.

\smallskip

\noindent\textbf{Use of Large Language Models.} During the development and preparation of this article, the authors used ChatGPT (OpenAI; GPT-5.6 Sol) as interactive tools to explore proof strategies, and to assist with drafting and revision. The authors assume full responsibility for the contents of the article.

\section{Modular pairs and base polytopes}\label{sec:base-polytopes}
Let $H$ be a finite nonempty set, and let
$\BR^H$ be the vector space of maps $q:H\to\BR$. For $I\subseteq H$,
write
\(
    q(I)\coloneqq\sum_{v\in I}q(v),
\)
and denote the complement of $I$ in $H$ by $I^c\coloneqq H - I$. We equip $\BR^H$
with the coordinatewise partial order: for $x,y\in\BR^H$, we write
$x\leq y$ if $x(v)\leq y(v)$ for every $v\in H$. Finally, $2^H$
denotes the collection of subsets of $H$, for a function $\rho:2^H\rightarrow \mathbb{Z}$, its \textit{adjoint} is the function $\rho^*:2^H\rightarrow \mathbb{Z}$ with $\rho^*(I)\coloneqq \rho(H)-\rho(H - I)$. 

For each polytope $\bP\subseteq\mathbb{R}^H$, its \textit{relative interior}, denoted by
$\operatorname{relint}(\bP)$, is the set of points of $\bP$ that do not
belong to any proper face of $\bP$. Also, for each $\Lambda\subseteq \mathbb{R}^H$, denote $\bP(\Lambda)\coloneqq \bP\cap\Lambda$. 

We say that a function $\rho:2^H\to\BR$ is \textit{submodular} if
\[
    \rho(I_1)+\rho(I_2)
    \geq
    \rho(I_1\cup I_2)+\rho(I_1\cap I_2)\quad \text{ for each } \quad  I_1,I_2\subseteq H.
 \] 
If in addition $\rho$ is nonnegative, nondecreasing, and $\rho(\varnothing)=0$, we say that $\rho$ is a \textit{polymatroid
rank function} of \emph{rank} $\rho(H)$.

Let $\bk$ be a field. A polymatroid rank function $\rho$ on $H$ is
\textit{linear over $\bk$} if there are a finite-dimensional
$\bk$-vector space $A$ and a finite collection of vector subspaces $A_v\subseteq A$, for $v\in H$, such that
$\rho(I)=\dim_\bk\sum_{v\in I}A_v$ for every $I\subseteq H$. In this
case, the collection of $A_v$ for $v\in H$ is called a \textit{linear representation} of
$\rho$.

For a function $\rho:2^H\to\BR$, define
\[
    \bI_\rho \coloneqq
    \{
        q\in\BR_{\geq0}^H
        \mid \,
        q(I)\leq\rho(I)\text{ for every }I\subseteq H
    \}, \quad \text{ and } \quad
    \bP_\rho \coloneqq
    \{
        q\in\bI_\rho
        \mid \,
        q(H)=\rho(H)
    \}.
\]
When $\rho$ is a polymatroid rank function, $\bI_\rho$ and $\bP_\rho$
are its \textit{independence polytope} and \textit{base polytope}, respectively. 

\begin{lemma}\label{lem:continuous-polymatroid-base-extension}
Let $\rho$ be a 
submodular function. For each $x\in\bI_\rho$, there exists $b\in\bP_\rho$ such that $x\leq b$. If $\rho$ is integer-valued and $x\in\BZ_{\geq0}^H$, then $b$ may be chosen in $\BZ_{\geq0}^H$.
\end{lemma}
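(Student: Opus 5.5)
The plan is the classical saturation (greedy augmentation) argument for polymatroids. The only delicate point is that $\rho$ is assumed merely submodular: it need not be monotone, and $\rho(\varnothing)=0$ is not given. Note first that $x\in\bI_\rho$ forces $0=x(\varnothing)\leq\rho(\varnothing)$. For $b\in\bI_\rho$, call $I\subseteq H$ \emph{tight} for $b$ if $b(I)=\rho(I)$.

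The key step is that tight sets are closed under union. If $I,J$ are tight for $b\in\bI_\rho$, then by submodularity and the constraints defining $\bI_\rho$,
\[
\rho(I\cup J)+\rho(I\cap J)\leq\rho(I)+\rho(J)=b(I)+b(J)=b(I\cup J)+b(I\cap J)\leq\rho(I\cup J)+\rho(I\cap J).
\]
So equality holds throughout. Since $b(I\cup J)\leq\rho(I\cup J)$ and $b(I\cap J)\leq\rho(I\cap J)$ separately, each must be an equality, and $I\cup J$ is tight. This also covers the case $I\cap J=\varnothing$, using $\rho(\varnothing)\geq 0=b(\varnothing)$.

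For the real case, consider the set $S=\{b\in\bI_\rho\mid b\geq x\}$. It is nonempty because it contains $x$. It is compact, being closed and bounded: for each $v$ we have $b(v)\leq\rho(\{v\})$. Choose $b\in S$ maximizing $b(H)$. I claim every $v\in H$ lies in some tight set. If some $v$ lies in none, set
\[
\varepsilon=\min\{\rho(I)-b(I)\mid v\in I\}>0,
\]
and replace $b$ by $b+\varepsilon e_v$. The new vector still lies in $\bI_\rho$: constraints for sets not containing $v$ are unchanged, and those for sets containing $v$ hold by the choice of $\varepsilon$. It is still $\geq x$ and nonnegative, and it has a larger total, contradicting maximality. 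Hence every $v$ lies in a tight set. Taking the union of these sets and using the closure property, $H$ is tight, that is, $b(H)=\rho(H)$. Thus $b\in\bP_\rho$ and $b\geq x$. (If $H$ were empty the statement would be trivial, but $H$ is nonempty by the paper's standing assumption.)

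For the integral case, run the same augmentation iteratively, starting from $b=x$. When $\rho$ is integer-valued and $b$ is integral, each $\varepsilon$ above is a positive integer, so every $b$ produced is integral and remains in $S$. Each step raises $b(H)$ by at least $1$, and $b(H)\leq\rho(H)$ throughout, so the process terminates. At termination no augmentation is possible, which means every element lies in a tight set, and the closure argument above gives $b(H)=\rho(H)$ with $b\in\BZ_{\geq0}^H$. The same iteration also handles the real case if one prefers to avoid compactness, but then termination is not automatic, which is why I would use the maximization argument there. I do not expect a real obstacle; the only point needing care is that the union-closure step uses nothing beyond submodularity and $\rho(\varnothing)\geq0$.
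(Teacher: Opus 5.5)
Your proof is correct and follows the paper's argument. You maximize $b(H)$ over $\{y\in\bI_\rho\mid y\geq x\}$, show via the augmentation $b+\varepsilon e_v$ that every $v$ lies in a tight set, and conclude that $H$ is tight because tight sets are closed under unions. The differences are cosmetic: you prove the union-closure directly, correctly noting that $\rho(\varnothing)\geq 0$ is forced by $x\in\bI_\rho$, where the paper cites Schrijver; and in the integral case you iterate the augmentation instead of maximizing over the finite set of integral points.
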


\begin{proof}
Let $\Lambda=\BR$. If $\rho$ and $x$ are integral, we may instead take
$\Lambda=\BZ$. In either case, consider
\[
    C_x^\Lambda
    \coloneqq
    \{y\in\bI_\rho\cap\Lambda^H\mid \, x\leq y\}.
\]
This set is nonempty and compact when $\Lambda=\BR$, while it is finite when $\Lambda=\BZ$. Choose $b\in C_x^\Lambda$ maximizing $b(H)$. It is enough to show that $b(H)=\rho(H)$.

We claim that for each $v\in H$, there must be a subset $I_v\subseteq H$ containing $v$ such that $b(I_v)=\rho(I_v)$. Indeed, if no such subset existed for some $v$, then all the finitely many slacks $\rho(I)-b(I)$, for $I$ containing $v$, would be positive. We could then increase the $v$-th coordinate of $b$ by a sufficiently small positive real number when $\Lambda=\BR$, or by $1$ when $\Lambda=\BZ$, and obtain a point of $C_x^\Lambda$ with larger modulus, a contradiction, proving the claim.

Finally, the family of tight sets $I_v$ is closed under unions by submodularity; see \cite[Thm.~44.2, p.~767]{Schrijver_B_2003}. Since $\bigcup_{v\in H}I_v=H$, it follows that $b(H)=\rho(H)$. 
\end{proof}

Let $\rho$ be a submodular function on $H$. The \textit{Dilworth
truncation} $\widehat\rho:2^H\to\BR$ is defined, for each nonempty
$I\subseteq H$, by
\[
    \widehat\rho(I)
    \coloneqq
    \min\Big\{
        \sum_{J\in\Pi}\rho(J)
        \mid\,
        \Pi\text{ is a partition of }I\text{ into nonempty subsets}
    \Big\},
\]
and by $\widehat\rho(\varnothing)=0$. By
\cite[Thm.~48.2, p.~822]{Schrijver_B_2003}, the function $\widehat\rho$ is
submodular.

The following observation follows directly from the definition; see \cite[Sec.~48.2, p.~822]{Schrijver_B_2003}.

\begin{lemma}\label{lem:Poli-dilwort}
Let $\rho:2^H\to\BR$ be a submodular function. Then
\[
    \bI_{\widehat\rho}
    =
    \left\{
        q\in\BR_{\geq0}^H
        \,\middle|\,
        q(I)\leq\rho(I)
        \text{ for every nonempty }I\subseteq H
    \right\}.
\]
\end{lemma}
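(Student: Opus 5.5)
The plan is to prove the two inclusions separately, writing $Q$ for the right-hand side. Both sets lie in $\BR_{\geq0}^H$, so only the inequalities $q(I)\leq\cdot$ for nonempty $I$ need comparison. The condition for $I=\varnothing$ is vacuous on both sides: $\widehat\rho(\varnothing)=0=q(\varnothing)$.

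For $\bI_{\widehat\rho}\subseteq Q$, take $q\in\bI_{\widehat\rho}$ and a nonempty $I\subseteq H$. The trivial partition $\Pi=\{I\}$ is admissible in the minimum defining $\widehat\rho(I)$. Hence $\widehat\rho(I)\leq\rho(I)$, and so $q(I)\leq\widehat\rho(I)\leq\rho(I)$.

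For $Q\subseteq\bI_{\widehat\rho}$, take $q\in Q$ and a nonempty $I$. Choose a partition $\Pi$ of $I$ into nonempty blocks attaining the minimum defining $\widehat\rho(I)$; it exists because $I$ has only finitely many partitions. Since $q$ is additive over disjoint unions,
\[
    q(I)=\sum_{J\in\Pi}q(J)\leq\sum_{J\in\Pi}\rho(J)=\widehat\rho(I),
\]
where each inequality $q(J)\leq\rho(J)$ holds because every block $J$ is nonempty and $q\in Q$.

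There is no real obstacle here. The only points to watch are the convention $\widehat\rho(\varnothing)=0$, which differs from $\rho(\varnothing)$ in general, and the use of nonempty blocks. This is exactly why $Q$ imposes constraints only for nonempty $I$. Submodularity of $\rho$ is not needed for this equality. It matters only elsewhere, where one wants $\widehat\rho$ itself to be submodular, as in \cite[Thm.~48.2]{Schrijver_B_2003}, so that $\bI_{\widehat\rho}$ behaves as an independence polytope.
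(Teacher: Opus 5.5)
Your proof is correct and is exactly the direct-from-the-definition argument the paper has in mind; the paper itself just cites Schrijver, Sec.~48.2. The trivial partition $\{I\}$ gives $\widehat\rho(I)\leq\rho(I)$ and hence one inclusion, and additivity of $q$ over the blocks of any partition of $I$ into nonempty sets gives the other.
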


\subsection{Hilbert polynomials of multiplicity-free varieties}
\label{subsection:hilbert-polynomials-multiplicity-free-varieties}
Let $\bk$ be an algebraically closed field. For each $v\in H$, let $V_v$ be a finite-dimensional $\bk$-vector space. For each $I\subseteq H$, put
\(
    \BP_I\coloneqq\prod_{v\in I}\BP(V_v),
\)
and let $\tau_I:\BP_H\to\BP_I$ be the canonical projection. For
$\Bn=(n_v)_{v\in H}\in\BZ^H$, define
\[
    \cO_{\BP_H}(\Bn)
    \coloneqq
    \bigotimes_{v\in H}
    \tau_v^*\cO_{\BP(V_v)}(n_v).
\]
For a closed subscheme $Y\subseteq\BP_H$, put
\[
    h_Y(\Bn)\coloneqq
    \dim_\bk H^0\bigl(Y,\cO_Y(\Bn)\bigr).
\]
For $n_v\gg0$ for every $v\in H$, this function agrees with a
numerical polynomial
\(
    P_Y(\Bn),
\)
which we call the \textit{multivariate Hilbert polynomial} of $Y$. 

For $q\in\BZ_{\geq0}^H$, set
\[
    C_q(\Bn)
    \coloneqq
    \prod_{v\in H}\binom{n_v+q_v}{q_v}
    \quad\text{ and }\quad
    B_q(\Bn)
    \coloneqq
    \prod_{v\in H}\binom{n_v+q_v-1}{q_v}.
\]
The polynomials $C_q(\Bn)$ form a basis of the space of numerical
polynomials. If we write 
\[
    P_Y(\Bn)
    =
    \sum_{q\in\BZ_{\geq0}^H}e_q(Y)C_q(\Bn),
\]
then $e_q(Y)\in \mathbb{Z}$, with $e_q(Y)\geq 0$ when $q(H)=\dim Y$ and $e_q(Y)=0$ when $q(H)>\dim Y$. 

Assume that $Y$ is a variety, that is, integral. Define
\(
    \varphi_Y(I)
    \coloneqq
    \dim\tau_I(Y)
\)
for nonempty $I\subseteq H$, and put $\varphi_Y(\varnothing)=0$. The
function $\varphi_Y$ is an integral polymatroid rank function;
see \cite[Prop.~5.1, p.~23]{Castillo_at_all_2021}. Put
\(
    \bI_Y\coloneqq\bI_{\varphi_Y}
\)
and 
\(
    \bP_Y\coloneqq\bP_{\varphi_Y}.
    \)
The \textit{multidegree support} of $Y$ is
\[
    M_Y
    \coloneqq
    \left\{
        q\in\BZ_{\geq0}^H
        \,\middle|\,
        q(H)=\dim Y\text{ and }e_q(Y)\neq0
    \right\}.
\]
Then \cite[Thm.~A, p.~2\textbf{}]{Castillo_at_all_2021} yields
\(
    M_Y=\bP_Y(\BZ^H).
\)
Put $D_Y\coloneqq\bI_Y(\BZ^H)$.

We say that $Y$ is \textit{multiplicity-free} if $e_q(Y)=1$ for every $q\in M_Y$. The $e_q(Y)$ for $q\in\BZ_{\geq0}^H$ with $q(H)=\dim Y$ are the coefficients in the expression of the Chow class of $Y$ in the basis of monomials on the hyperplane classes of the factors of $\BP_H$. So $Y$ is multiplicity-free if and only if its Chow class is a (multiplicity-free) sum of monomials. 

\begin{prop}\label{prop:multiplicity-free-hilbert-polynomial}
Let $Y\subseteq\BP_H$ be a multiplicity-free variety. Then
\[
    P_Y(\Bn)
    =
    \sum_{q\in D_Y}B_q(\Bn).
\]
\end{prop}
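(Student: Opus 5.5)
The plan is to deduce the formula from the result of Castillo et al.~\cite{castillo2025kpolynomialsmultiplicityfreevarieties} on multiplicity-free varieties, followed by a purely combinatorial change of basis. That paper shows that for a multiplicity-free $Y$ the $K$-polynomial, and hence $P_Y$, is determined by the Chow class. More concretely, $Y$ has a flat degeneration to a reduced union of products of linear spaces, indexed by $M_Y=\bP_Y(\BZ^H)$ (Brion's theorem: multiplicity-free subvarieties have reduced Cohen--Macaulay degenerations with rational singularities). Equivalently, $P_Y$ is given by an alternating (Möbius-type) formula in which only the polymatroid $\varphi_Y$ appears. I would take as input the statement that $P_Y(\Bn)$ depends only on $\varphi_Y$. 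From it I would derive the consequence that
\[
    P_Y(\Bn)=\sum_{q\in\BZ_{\geq0}^H}e_q(Y)C_q(\Bn),
\]
where $e_q(Y)$ is the reduced Euler characteristic (up to sign) of the order complex of the part of $M_Y$ lying above $q$.

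The argument would then proceed in three steps.

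\textbf{Step 1: reduce to a combinatorial identity.} Write $C_q=\sum_{p\leq q}B_p$. This follows from the univariate identity $\binom{n+k}{k}=\sum_{j\leq k}\binom{n+j-1}{j}$, the hockey stick, applied factorwise. The formula to prove therefore becomes
\[
    \sum_{q}e_q(Y)\,\#\{p\leq q\}\text{-weights}
    =\sum_{p}\Big(\sum_{q\geq p}e_q(Y)\Big)B_p .
\]
So it is equivalent to showing that $\sum_{q\geq p}e_q(Y)$ equals $1$ if $p\in D_Y$ and $0$ otherwise.

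\textbf{Step 2: evaluate the upward sum.} For $p\notin\bI_Y$ no $q$ in the support lies above $p$; this needs the support of all the $e_q$ to lie in $\bI_Y$, which the Möbius description provides. For $p\in D_Y$, \autoref{lem:continuous-polymatroid-base-extension} gives an integral base above $p$. The set of $q\geq p$ appearing is then the lattice points of the polymatroid $\bI_Y\cap\{q\geq p\}$, translated by $p$. This is again an integral polymatroid, namely the contraction $I\mapsto\min_{J\supseteq I}(\varphi_Y(J)-p(J))$ in its standard form.

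\textbf{Step 3: reduce to the case $p=0$.} The sum then becomes the Euler characteristic of a contractible complex, i.e.\ $1$. An independent check is to evaluate at $\Bn=\mathbf 0$: $B_q(\mathbf 0)=0$ unless $q=0$, and $P_Y(\mathbf 0)=\chi(\cO_Y)=1$, since multiplicity-free varieties have rational singularities and are rational-like with $h^i(\cO_Y)=0$ for $i>0$. This is consistent with the claimed formula.

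A cleaner alternative route I would try first avoids the Möbius formula. Both sides are determined by the polymatroid, because the paper has already established that multiplicity-free $Y$ with the same $\varphi_Y$ share the Hilbert polynomial, via \cite{castillo2025kpolynomialsmultiplicityfreevarieties}. It would then suffice to verify the formula on one convenient model for each linear polymatroid, for instance the closure of a generic linear-space image; however, not every polymatroid is linear, so the model should instead be the degeneration itself. Using the degeneration to a union of products of projective spaces indexed by the base lattice points, with the Stanley--Reisner-type inclusion--exclusion, I would check that the lattice points of $\bI_Y$ are counted once each.

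The main obstacle is pinning down exactly what is imported from \cite{castillo2025kpolynomialsmultiplicityfreevarieties}: namely, the precise formula for the $K$-polynomial of a multiplicity-free variety in terms of $\varphi_Y$, and the verification that its coefficients telescope to the indicator of $D_Y$. I expect this to come down to the fact that $D_Y$ is down-closed and that its upper parts are polymatroids, hence have contractible associated complexes.
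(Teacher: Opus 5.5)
Your Step~1 is exactly the paper's reduction. By the hockey-stick identity, everything comes down to showing that $\sum_{q\geq p}e_q(Y)$ equals $1$ for $p\in D_Y$ and $0$ otherwise. For $p\in D_Y$ the paper, like you, uses \autoref{lem:continuous-polymatroid-base-extension} to find an element of $M_Y$ above $p$.

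The gap is that you never actually evaluate these upward sums, and that evaluation is the whole content of the proposition. The paper imports both halves from \cite{castillo2025kpolynomialsmultiplicityfreevarieties}. Thm.~6.9 says every $q$ with $e_q(Y)\neq0$ lies below some element of $M_Y$, which gives the vanishing for $p\notin D_Y$. Prop.~6.12(iii) says the upward sum is $1$ whenever $p$ lies below some element of $M_Y$.

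What you propose to import instead, that $P_Y$ depends only on $\varphi_Y$, is a uniqueness statement. It yields neither fact without an actual computation. Moreover, the description of $e_q(Y)$ you say you would derive from it is false. For the diagonal $\BP^2\subset\BP^2\times\BP^2$ one has
\[
    P_Y(\Bn)=\binom{n_1+n_2+2}{2}=C_{(2,0)}+C_{(1,1)}+C_{(0,2)}-C_{(1,0)}-C_{(0,1)},
\]
so $e_{(0,0)}(Y)=0$. Yet the part of $M_Y$ above $(0,0)$ consists of three incomparable points, whose order complex has reduced Euler characteristic $2$. The correct statement is $e_q(Y)=-\mu(q,\hat 1)$ in the poset $D_Y\cup\{\hat 1\}$, with $e_q(Y)=0$ for $q\notin D_Y$. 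With it, the upward sum is $1$ directly from the recursion defining $\mu$, and no contraction polymatroid or contractible complex is needed.

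Your Step~3 also has no mechanism. Nothing you state relates $(e_q(Y))_{q\geq p}$ to the coefficients of another multiplicity-free variety with the contracted polymatroid. The check $P_Y(\mathbf 0)=\chi(\cO_Y)=1$ only covers $p=0$.

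The degeneration route you mention at the end can be completed, and it would give a proof bypassing the $K$-polynomial results of \cite{castillo2025kpolynomialsmultiplicityfreevarieties}. By Brion's theorem, $Y$ degenerates flatly to the reduced union $Y_0=\bigcup_{m\in M_Y}\prod_{v\in H}\BP^{m_v}$ of products of coordinate subspaces of fixed flags. This union is cut out by $\bigcap_{m\in M_Y}\mathfrak p_m$, where $\mathfrak p_m=(x_{v,i}\mid v\in H,\ i>m_v)$. The monomials of multidegree $\Bn$ outside this ideal are those involving only variables $x_{v,i}$ with $i\leq m_v$, for some $m\in M_Y$. Inclusion--exclusion over $M_Y$ then gives
\[
    P_Y(\Bn)=P_{Y_0}(\Bn)=\sum_{\varnothing\neq S\subseteq M_Y}(-1)^{|S|-1}C_{\wedge S}(\Bn),
\]
where $\wedge$ is the coordinatewise minimum. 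Since $\wedge S\geq p$ if and only if $m\geq p$ for every $m\in S$, it follows that $\sum_{q\geq p}e_q(Y)=\sum_{\varnothing\neq S\subseteq M_Y^{\geq p}}(-1)^{|S|-1}$, where $M_Y^{\geq p}\coloneqq\{m\in M_Y\mid m\geq p\}$. This sum is $1$ if $M_Y^{\geq p}\neq\varnothing$ and $0$ otherwise. By $M_Y=\bP_Y(\BZ^H)$ and \autoref{lem:continuous-polymatroid-base-extension}, $M_Y^{\geq p}$ is nonempty exactly when $p\in D_Y$. As written, however, you only say you would check this count, so the key identity is missing from your proposal.
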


\begin{proof}
The hockey-stick identity gives
\(
    C_q(\Bn)=\sum_{a\leq q}B_a(\Bn)
\)
for every $q\in\BZ_{\geq0}^H$. Therefore,
\begin{align*}
    P_Y(\Bn)
    =
    \sum_{q\in\BZ_{\geq0}^H}e_q(Y)
    \sum_{a\leq q}B_a(\Bn)
    =
    \sum_{a\in\BZ_{\geq0}^H}
    \bigg(\sum_{q\geq a}e_q(Y)\bigg)B_a(\Bn).
\end{align*}
By \cite[Thm.~6.9]{castillo2025kpolynomialsmultiplicityfreevarieties}, if $e_q(Y)\neq 0$ then $q\leq b$ for some $b\in M_Y$.
Thus, if $a\notin D_Y$, then
\[
    \sum_{q\geq a}e_q(Y)=0.
\]
On the other hand, if $a\in D_Y$, then there exists $m\in M_Y$ with
$m\geq a$ by \autoref{lem:continuous-polymatroid-base-extension}. So, \cite[Prop.~6.12(iii)]{castillo2025kpolynomialsmultiplicityfreevarieties} yields
\[
    \sum_{q\geq a}e_q(Y)=1.
\]
Substitution in the preceding expansion proves the result.
\end{proof}

\subsection{Hilbert polynomials of normal-crossings schemes}
\label{subsection:hilbert-polynomials-normal-crossings-schemes}
Let $Y$ be a reduced scheme of pure dimension $d$ and finite type over an algebraically closed field $\bk$. We say that $Y$ is \textit{normal-crossings} if, for each (closed) point $y\in Y$, there are an integer $1\leq r\leq d+1$ and an isomorphism of complete local rings
\[
    \widehat{\cO}_{Y,y} \simeq \bk\llbracket x_1,\dots,x_{d+1}\rrbracket/ (x_1\cdots x_r).
\]
If in addition the irreducible components of $Y$ are smooth, we say that $Y$ is \textit{simple normal-crossings}. In this case, each scheme-theoretic intersection
of components is either empty or smooth of the expected codimension. 

Denote by $E_Y$ its set of irreducible components, and by $Y_\Delta$ the scheme-theoretic intersection of the components in $\Delta$ for each nonempty $\Delta\subseteq E_Y$. For simplicity, put $Y_e\coloneqq Y_{\{e\}}$ for each $e\in E_Y$.

\begin{prop}\label{prop:normal-crossings-inclusion-exclusion}
Let $Y\subseteq\BP_H$ be a simple normal-crossings scheme.
Then
\[
    P_Y(\Bn)
    =
    \sum_{\substack{\varnothing\neq\Delta\subseteq E_Y\\
                     Y_\Delta\neq\varnothing}}
    (-1)^{|\Delta|-1}P_{Y_\Delta}(\Bn).
\]
\end{prop}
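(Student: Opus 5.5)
The plan is to prove an exact sequence of sheaves on $Y$ (a Mayer--Vietoris, or Čech-type, resolution of $\cO_Y$ by the structure sheaves of the strata), twist it by $\cO_Y(\Bn)$, and take Euler characteristics. Fix an ordering $E_Y=\{e_1,\dots,e_m\}$ and, for each $p\geq 0$, consider
\[
    0\to\cO_Y\to\bigoplus_{|\Delta|=1}\cO_{Y_\Delta}\to\bigoplus_{|\Delta|=2}\cO_{Y_\Delta}\to\cdots\to\bigoplus_{|\Delta|=m}\cO_{Y_\Delta}\to0,
\]
with the usual alternating-sign restriction maps, where all sheaves are pushed forward to $\BP_H$. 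Here $\cO_{Y_\Delta}$ is the structure sheaf of the scheme-theoretic intersection, i.e.\ $\cO_Y/\sum_{e\in\Delta}\mathcal I_e$. Terms with $Y_\Delta=\varnothing$ vanish. Once exactness is known, twisting by $\cO_{\BP_H}(\Bn)$ keeps the complex exact, since the twist is locally free. Additivity of the Euler characteristic then gives
\[
    \chi(\cO_Y(\Bn))=\sum_{\Delta\neq\varnothing}(-1)^{|\Delta|-1}\chi(\cO_{Y_\Delta}(\Bn)).
\]
For $n_v\gg0$ for all $v$, Serre vanishing on the product, applied to the ample $\cO_{\BP_H}(\Bn)$ with all $n_v$ large, makes $\chi=h^0=P$ for each of the finitely many closed subschemes involved. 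This yields the formula, and in fact shows it holds as an identity of numerical polynomials, since both sides are Euler characteristics and hence polynomial.

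The key step is exactness of the complex, which is local. Exactness can be checked on stalks, and since completion is faithfully flat, after passing to completions at a closed point $y$. If $y\in Y$ lies on exactly the components $e_1,\dots,e_r$, then by the simple normal-crossings hypothesis we have
\[
    \widehat\cO_{Y,y}\simeq R/(x_1\cdots x_r)\quad\text{with}\quad R=\bk\llbracket x_1,\dots,x_{d+1}\rrbracket.
\]
Because the components are smooth, hence analytically irreducible at $y$, the completed ideals of $e_1,\dots,e_r$ are $(x_1),\dots,(x_r)$ in some order. The completed ideals of the other components are the unit ideal. The complex then becomes the Čech-type complex
\[
    0\to R/(x_1\cdots x_r)\to\bigoplus_i R/(x_i)\to\bigoplus_{i<j}R/(x_i,x_j)\to\cdots.
\]
Writing $R=\bk\llbracket x_{1},\dots,x_r\rrbracket\widehat\otimes S$, this complex is a completed tensor product of $S$ with the corresponding complex for the monomial ideal in $r$ variables. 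All its terms are spanned by monomials, so it splits as a direct sum over monomials $x^\alpha$. For a fixed $\alpha$, let $Z=\{i\le r : \alpha_i=0\}$. The monomial survives in $R/(x_i\colon i\in\Delta)$ exactly when $\Delta\subseteq Z$, and it survives in $R/(x_1\cdots x_r)$ exactly when $Z\neq\varnothing$. So the $\alpha$-graded piece is the augmented cochain complex of the full simplex on $Z$. That complex is acyclic when $Z\neq\varnothing$ and is zero when $Z=\varnothing$. Hence the complex is exact.

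The only subtle point, and the one I expect to need care, is identifying the completed component ideals with the coordinate hyperplanes. This requires that each irreducible component through $y$ corresponds to exactly one minimal prime $(x_i)$ of the completion. That correspondence is where smoothness of the components (the \emph{simple} part of the hypothesis) is used: it gives unibranchness, so no component splits into several branches at $y$. Everything else is formal.
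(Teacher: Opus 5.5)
Your proposal is correct and follows the paper's route: the same augmented \v{C}ech-type (Mayer--Vietoris) complex of structure sheaves of the strata, twisted by $\cO_{\BP_H}(\Bn)$, followed by additivity of Euler characteristics. The paper simply cites Fujino's Mayer--Vietoris simplicial resolution for exactness, remarking that it holds over any algebraically closed field. Your completion-and-monomial computation proves that exactness directly; strictly, the monomial decomposition of the power-series complex is a direct product rather than a direct sum, which is harmless for exactness. You also make explicit the vanishing of higher cohomology for $n_v\gg0$, which the paper leaves implicit.
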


\begin{proof}
With the usual alternating restriction maps, the augmented
\v{C}ech-type complex
\[
    \begin{tikzcd}
        0 \arrow[r] & \cO_Y \arrow[r] & \displaystyle\bigoplus_{|\Delta|=1}\cO_{Y_\Delta} \arrow[r] & \displaystyle\bigoplus_{|\Delta|=2}\cO_{Y_\Delta}   \arrow[r] & \cdots \arrow[r] & \cO_{Y_{E_Y}} \arrow[r] & 0
    \end{tikzcd}
\]
is exact, where an empty intersection contributes zero; see
\cite[Def.~2.9, pp.~30--31]{Fujino_LMMP}, where it is called the
Mayer--Vietoris simplicial resolution. Although the reference works over
$\BC$, the same statement holds over an arbitrary algebraically closed field. Tensoring by
$\cO_{\BP_H}(\Bn)$ preserves exactness, and the additivity of Euler
characteristics gives the formula.
\end{proof}

\begin{prop}\label{prop:normal-crossings-euler-cancellation}
Let $Y\subseteq\BP_H$ be a simple normal-crossings scheme such that 
every nonempty $Y_\Delta$ is a multiplicity-free variety.
For each $a\in\BZ_{\geq0}^H$, let $K_a$ be the simplicial complex on
$E\coloneqq E_Y$ whose nonempty faces are the nonempty $\Delta\subseteq E$ such that $Y_\Delta\neq\varnothing$ and $a\in D_{Y_\Delta}$.
Then
\[
    P_Y(\Bn)
    =
    \sum_{a\in\BZ_{\geq0}^H}\chi(K_a)B_a(\Bn),
\]
where $\chi(K_a)$ is the Euler characteristic of $K_a$. If, in addition,
every $K_a$ having a nonempty face is connected and acyclic, then
\[
    P_Y(\Bn)
    =
    \sum_{a\in\bigcup_{e\in E}D_{Y_e}}
    B_a(\Bn).
\]
\end{prop}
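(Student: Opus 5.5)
The plan is to combine \autoref{prop:normal-crossings-inclusion-exclusion} with \autoref{prop:multiplicity-free-hilbert-polynomial} and then exchange the order of summation. By \autoref{prop:normal-crossings-inclusion-exclusion},
\[
    P_Y(\Bn)=\sum_{\substack{\varnothing\neq\Delta\subseteq E\\ Y_\Delta\neq\varnothing}}(-1)^{|\Delta|-1}P_{Y_\Delta}(\Bn).
\]
Each nonempty $Y_\Delta$ is a multiplicity-free variety by hypothesis, so \autoref{prop:multiplicity-free-hilbert-polynomial} gives $P_{Y_\Delta}(\Bn)=\sum_{a\in D_{Y_\Delta}}B_a(\Bn)$. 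Both sums are finite, because each $D_{Y_\Delta}$ is the set of lattice points of a bounded polytope. We may therefore swap them and obtain
\[
    P_Y(\Bn)=\sum_{a\in\BZ_{\geq0}^H}\Big(\sum_{\Delta}(-1)^{|\Delta|-1}\Big)B_a(\Bn),
\]
where the inner sum runs over the nonempty $\Delta$ with $Y_\Delta\neq\varnothing$ and $a\in D_{Y_\Delta}$. These are exactly the nonempty faces of $K_a$. The inner sum is $\sum_{i\geq0}(-1)^i f_i(K_a)$, with $f_i$ the number of $i$-dimensional faces (a face with $|\Delta|=i+1$ has dimension $i$), and this is $\chi(K_a)$ by definition.

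Before this, I will check that $K_a$ really is a simplicial complex, that is, closed under taking nonempty subsets. Let $\Delta'\subseteq\Delta$ and note that $Y_\Delta\subseteq Y_{\Delta'}$. Then $Y_{\Delta'}\neq\varnothing$. Moreover, projections satisfy $\tau_I(Y_\Delta)\subseteq\tau_I(Y_{\Delta'})$, so $\varphi_{Y_\Delta}\leq\varphi_{Y_{\Delta'}}$ pointwise. Hence $\bI_{Y_\Delta}\subseteq\bI_{Y_{\Delta'}}$, and so $a\in D_{Y_\Delta}$ implies $a\in D_{Y_{\Delta'}}$. This monotonicity is the one point needing care, and it is immediate from the definition of $\varphi_Y$ as dimensions of projections.

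For the second formula, there are two cases. If $K_a$ has a nonempty face and is connected and acyclic, its reduced homology vanishes, so $\chi(K_a)=1$. If $K_a$ has no nonempty face, then $\chi(K_a)=0$. By monotonicity, $K_a$ has a nonempty face if and only if it has a vertex, that is, if and only if $a\in D_{Y_e}$ for some $e\in E$. Substituting these values into the first formula yields $P_Y(\Bn)=\sum_{a\in\bigcup_e D_{Y_e}}B_a(\Bn)$.
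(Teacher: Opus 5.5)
Your proposal is correct and follows the paper's proof essentially step for step. You verify that $K_a$ is closed under nonempty subsets via $\varphi_{Y_\Delta}\leq\varphi_{Y_{\Delta'}}$, combine the inclusion--exclusion formula with the multiplicity-free formula and swap the finite sums to obtain $\chi(K_a)$, and then use $\chi(K_a)\in\{0,1\}$ together with the vertex criterion $a\in\bigcup_{e}D_{Y_e}$ to get the second formula.
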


\begin{proof}
We check first that $K_a$ is a simplicial complex. Let $\Delta\in K_a$ and $\varnothing\neq\Gamma\subseteq\Delta$. Since
$Y_\Delta\subseteq Y_\Gamma$, one has
that $Y_\Gamma\neq\varnothing$ and $\varphi_{Y_\Delta}(I)\leq\varphi_{Y_\Gamma}(I)$ for every
$I\subseteq H$. Thus $D_{Y_\Delta}\subseteq D_{Y_\Gamma}$, and hence
$\Gamma\in K_a$.

By \autoref{prop:normal-crossings-inclusion-exclusion} and
\autoref{prop:multiplicity-free-hilbert-polynomial}, reordering the finite
sums gives
\begin{align*}
    P_Y(\Bn) & =
    \sum_{\substack{\varnothing\neq\Delta\subseteq E\\
                     Y_\Delta\neq\varnothing}}
    (-1)^{|\Delta|-1}\sum_{a\in D_{Y_\Delta}}B_a(\Bn)
    =
    \sum_{a\in\BZ_{\geq0}^H}\Big(
        \sum_{\substack{\Delta\in K_a\\\Delta\neq\varnothing}}
        (-1)^{|\Delta|-1}
    \Big)B_a(\Bn)
    =
    \sum_{a\in\BZ_{\geq0}^H}\chi(K_a)B_a(\Bn).
\end{align*}

Now assume that every $K_a$ having a nonempty face is connected and acyclic then $\chi(K_a)=1$, see \cite[Thm.~2.44, p.~146]{Hatcher_2002}.
If it has only the empty face, then $\chi(K_a)=0$. Since $K_a$ is simplicial, it has a nonempty face if and only if it contains a singleton, which is equivalent to $a\in\bigcup_{e\in E}D_{Y_e}$. Thus, the last formula follows.
\end{proof}

\begin{example}\label{Exa: K_a_disconnected} We give an example, where the first formula in \autoref{prop:normal-crossings-euler-cancellation} applies, but does not reduce to the second.
Let $H\coloneqq [2] = \{1,2\}$. Put $\BP_H\coloneqq\BP^1\times \BP^1$ with coordinates $([x_0:x_1],[y_0:y_1])$. Let 
\[
Y_1\coloneqq V(x_0),\quad Y_2\coloneqq V(y_0),\quad Y_3\coloneqq V(x_0y_1+x_1y_0+x_1y_1),
\quad\text{and} \quad Y\coloneqq Y_1\cup Y_2\cup Y_3.
\]
Then $Y$ is simple normal-crossings, with
\[
    Y_1\cap Y_2  =([0:1],[0:1]), \quad Y_1\cap Y_3  =([0:1],[1:-1]), \quad Y_2\cap Y_3  =([1:-1],[0:1]),
\]
and $Y_1\cap Y_2\cap Y_3=\varnothing$. 
Furthermore,
\[
    D_{Y_1}=\{(0,0),(0,1)\}, \quad  D_{Y_2}=\{(0,0),(1,0)\}, \quad \text{and} \quad D_{Y_3}=\{(0,0),(1,0),(0,1)\},
\]
and the corresponding multivariate Hilbert polynomials are
\[
P_{Y_1}(n_1,n_2)=1+n_2,
\quad
P_{Y_2}(n_1,n_2)=1+n_1,
\quad \text{and} \quad 
P_{Y_3}(n_1,n_2)=1+n_1+n_2.
\]
Clearly, the nonempty strata $Y_\Delta$ are multiplicity-free. 

Since $0\in D_{\Delta}$ for all $\Delta\subseteq [3]$ with $Y_\Delta\neq\varnothing$, the simplicial complex $K_{(0,0)}$ has $3$ vertices, $3$ edges but no $2$-faces. It is thus cyclic, homotopic to $S^1$, satisfying 
\(
\chi(K_{(0,0)})
=0.
\)
Also, 
\(
K_{(1,0)}=\big\{\{2\},\{3\}\big\}
\)
and
\(
K_{(0,1)}=\big\{\{1\},\{3\}\big\},
\)
both consisting of two disconnected vertices. Then $\chi(K_{(1,0)})=\chi(K_{(1,0)})=2$.

By \autoref{prop:normal-crossings-euler-cancellation},
the Hilbert polynomial of $Y$ is 
\[
    P_Y(n_1,n_2) = 0\cdot B_{(0,0)} +2B_{(1,0)} +2B_{(0,1)} = 2n_1+2n_2.
\]
On the other hand, \[
\sum_{a\in\cup D_{Y_i}}B_a(n)
=
B_{(0,0)}+B_{(1,0)}+B_{(0,1)}=1+n_1+n_2.
\]
\end{example}

\subsection{Polytopes associated to vector spaces}
\label{subsection:polytopes-vector-spaces}
Let $\bk$ be an algebraically closed field. For each $v\in H$, let $V_v$ be a finite-dimensional $\bk$-vector space. 
For each $I\subseteq H$, put $V_I\coloneqq\bigoplus_{v\in I}V_v$, and let
$\theta_I\colon V_H\to V_I$ be the corresponding
projection map. By convention, $V_\varnothing=(0)$.

Let $W\subseteq V_H$ be a vector subspace. Define
\(
    W_I\coloneqq\theta_I(W),
\)
and
\(
    \rho_W(I)\coloneqq\dim_\bk W_I.
\)
Then, $\rho_W$ is an integral polymatroid rank function. Write
$\bI_W\coloneqq\bI_{\rho_W}$ and $\bP_W\coloneqq\bP_{\rho_W}$.
The diagonal action of $(\bk^*)^H$ on $V_H$
preserves the rank function: $\rho_{\lambda W}=\rho_W$ for every
$\lambda\in(\bk^*)^H$.

Assume from now on that every coordinate projection
$p_v\coloneqq\theta_v|_W\colon W\to V_v$ is nonzero. For each nonempty $I\subseteq H$, let
$Y_{W,I}$ be the scheme-theoretic image of the rational map
\[
    \begin{tikzcd}
        \BP(W)  \arrow[r, dashed] & \displaystyle\prod_{v\in I}\BP(V_v)
    \end{tikzcd}
\]
induced by the coordinate projections, and put $Y_W\coloneqq Y_{W,H}$.
By \cite[Thm.~1.1, p.~4191]{Li_2018}, the variety $Y_W$ is multiplicity-free. Put 
\[
    \varphi_W\coloneqq\varphi_{Y_W},\quad M_W\coloneqq M_{Y_W},\quad \text{ and } \quad D_W\coloneqq D_{Y_W}.
\]

\begin{prop}\label{prop:projection-rank-dilworth-truncation}
For each $v\in H$, let
\(
    A_v
\)
be the image of the dual map 
\(
p_v^*:V_v^*\to W^*.
\)
Then the $A_v$, for $v\in H$, form a linear representation of $\rho_W$.
Furthermore, for general $[w]\in\BP(W)$ 
the intersections $A_v\cap w^\perp$, for $v\in H$, form a linear representation of $\varphi_W$. Also,
\[
    \varphi_W=\widehat{\rho_W-1},
\]
where $(\rho_W-1)(I)\coloneqq\rho_W(I)-1$.
\end{prop}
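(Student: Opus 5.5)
The first claim is linear algebra. For $I\subseteq H$, the sum $\sum_{v\in I}A_v$ is the image of the dual of $\theta_I|_W\colon W\to V_I$. Its dimension is therefore $\operatorname{rank}(\theta_I|_W)=\dim W_I=\rho_W(I)$. So the $A_v$ form a linear representation of $\rho_W$ inside $A\coloneqq W^*$.

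For the second claim I compute $\varphi_W(I)=\dim Y_{W,I}$ via the differential of the rational map $f_I\colon\BP(W)\dashrightarrow\BP_I$ at a general point $[w]$. The map is defined wherever $p_v(w)\neq0$ for all $v\in I$, and by generic smoothness (or just the rank of the differential at a general point, valid in any characteristic since the map is given by linear projections) $\dim Y_{W,I}$ equals the rank of $d f_I$ at a general $[w]$. The tangent space $T_{[w]}\BP(W)=W/\bk w$ maps to $\bigoplus_{v\in I}V_v/\bk p_v(w)$ by $u\mapsto(p_v(u)\bmod p_v(w))_v$. Dualizing, the image of the cotangent map in $(W/\bk w)^*=w^\perp\subseteq W^*$ is $\sum_{v\in I}p_v^*\big(p_v(w)^\perp\big)$, and $p_v^*(p_v(w)^\perp)=A_v\cap w^\perp$. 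Hence
\[
\varphi_W(I)=\dim\sum_{v\in I}(A_v\cap w^\perp)
\]
for $[w]$ general, with one fixed general $w$ working for all finitely many $I$ simultaneously.

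It remains to show $\dim\sum_{v\in I}(A_v\cap w^\perp)=\widehat{\rho_W-1}(I)$ for a general hyperplane $w^\perp$ of $A=W^*$. This is the classical Lovász/Dilworth-truncation theorem for a generic hyperplane section, and it is the main obstacle. The inequality $\leq$ is elementary. For any partition $\Pi$ of $I$, each block satisfies $\dim\big(\sum_{v\in J}A_v\big)\cap w^\perp=\rho_W(J)-1$, because $\sum_{v\in J}A_v\neq0$ (each $p_v\neq0$) and a general $w$ is not in its annihilator. Then
\[
\sum_{v\in I}(A_v\cap w^\perp)\subseteq\sum_{J\in\Pi}\Big(\sum_{v\in J}A_v\Big)\cap w^\perp,
\]
which gives $\varphi_W(I)\le\sum_J(\rho_W(J)-1)$.

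For $\geq$, I would cite Lovász's theorem (see Schrijver, Sec.~48, or Lovász 1977 on flats in matroids and geometric graphs). Alternatively, I would prove it by induction on $|I|$, choosing $w$ outside a finite union of proper subvarieties. Given $I$, set $S=\sum_{v\in I}(A_v\cap w^\perp)$ and $T=\sum_{v\in I}A_v$. If $S$ already contains some $A_v$-related deficiency, one splits $I$ into the classes of the relation ``$A_v\cap w^\perp$ and $A_{v'}\cap w^\perp$ interact'', i.e.\ decompose by which sub-sums are tight, and uses submodularity of the Dilworth truncation together with the inductive hypothesis to glue. The key genericity input is a dimension count: for two subspaces $B,C$ with $B\cap C\neq0$, a general hyperplane satisfies $(B\cap w^\perp)+(C\cap w^\perp)=(B+C)\cap w^\perp$, whereas if $B\cap C=0$ the sum has dimension $\dim B+\dim C-2$. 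Iterating this along a suitable ordering of $I$ yields a partition achieving equality. The delicate point is organizing this gluing so the partition produced is the minimizing one, so I would rely on the cited theorem rather than redo it.
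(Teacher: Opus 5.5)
Your proposal is correct and is essentially the paper's proof: the same dual-map computation for $\rho_W$, the same identification $\varphi_W(I)=\dim\sum_{v\in I}(A_v\cap w^\perp)$, and the same appeal to Lovász's generic-hyperplane theorem for $\varphi_W=\widehat{\rho_W-1}$, which the paper cites via Raz's Theorem~2.5. For the middle step, the kernel of your differential is exactly $F_I(w)/\bk w$ with $F_I(w)=\bigcap_{v\in I}p_v^{-1}\langle p_v(w)\rangle$, and the paper argues directly that the fiber through $[w]$ is a dense open subset of $\BP(F_I(w))$ — this, rather than ``linear projections'', is the real reason your rank computation holds in positive characteristic; your direct proof of the easy inequality $\le$ is a correct extra.
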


\begin{proof}
For each $I\subseteq H$, the sum of the dual maps $p_v^*$ for $v\in I$ has image $\theta_I(W)^*$. Hence $\rho_W(I)=\dim_\bk\sum_{v\in I}A_v$, proving the first statement.

As for the second statement, choose $w\in W$ general with $p_v(w)\neq0$ for every $v\in H$.
For each $I\subseteq H$, let
\[
    F_I(w)\coloneqq\bigcap_{v\in I}p_v^{-1}\big(\langle p_v(w)\rangle\big)\subseteq W.
\]
The fiber through $[w]$ of the rational map defining $Y_{W,I}$ is a
dense open subset of $\BP\big(F_I(w)\big)$. Furthermore, the annihilator of
$F_I(w)$ is 
\[
    \sum_{v\in I}p_v^*\big(p_v(w)^\perp\big) =\sum_{v\in I}(A_v\cap w^\perp).
\] Therefore,
\[
    \varphi_W(I)
    =\dim W-\dim F_I(w)
    =\dim\sum_{v\in I}(A_v\cap w^\perp),
\]
proving the second statement.

Since $w^\perp\subset W^*$ is
a general hyperplane, it follows from 
\cite[Thm.~2.5, p.~397]{Raz2019} that the set $\lbrace A_v\cap w^\perp\,\vert\, v\in H\rbrace$ is a linear representation of the Dilworth truncation of $\rho_W-1$. Thus, by the second statement, we have \( \varphi_W=\widehat{\rho_W-1}\), as required. 
\end{proof}

\begin{prop}\label{prop:general-vector-space-hilbert-polynomial}
Let $W\subseteq V_H$ be a vector subspace with nonzero coordinate projections. Then
\[
    D_W
    =
    \left\{
        a\in\BZ_{\geq0}^H
        \,\middle|\,
        a(I)\leq\rho_W(I)-1
        \text{ for every nonempty }I\subseteq H
    \right\},
\]
and
\[
    P_{Y_W}(\Bn)
    =
    \sum_{a\in D_W}B_a(\Bn).
\]
\end{prop}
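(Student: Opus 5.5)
The plan is to combine \autoref{prop:projection-rank-dilworth-truncation} with \autoref{lem:Poli-dilwort} for the description of $D_W$, and then apply \autoref{prop:multiplicity-free-hilbert-polynomial} for the Hilbert polynomial.

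First, by definition $D_W=D_{Y_W}=\bI_{\varphi_W}\cap\BZ^H$. By \autoref{prop:projection-rank-dilworth-truncation} we have $\varphi_W=\widehat{\rho_W-1}$. To apply \autoref{lem:Poli-dilwort} with $\rho=\rho_W-1$, I check that $\rho_W-1$ is submodular on $2^H$. Subtracting the constant $1$ from a submodular function leaves the inequality $\rho(I_1)+\rho(I_2)\geq\rho(I_1\cup I_2)+\rho(I_1\cap I_2)$ unchanged, since both sides drop by $2$. This holds for all pairs $I_1,I_2$, including those with $I_1\cap I_2=\varnothing$, where $(\rho_W-1)(\varnothing)=-1$ enters. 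The Dilworth truncation only ever evaluates $\rho$ on nonempty sets anyway. \autoref{lem:Poli-dilwort} then gives
\[
\bI_{\varphi_W}=\bigl\{q\in\BR_{\geq0}^H\,\big|\,q(I)\leq\rho_W(I)-1\text{ for every nonempty }I\subseteq H\bigr\}.
\]
Intersecting with $\BZ^H$ yields the claimed description of $D_W$.

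For the Hilbert polynomial, $Y_W$ is the scheme-theoretic image of a rational map from the irreducible variety $\BP(W)$, so it is integral. By \cite[Thm.~1.1]{Li_2018} it is multiplicity-free, as already recorded. \autoref{prop:multiplicity-free-hilbert-polynomial} therefore applies directly and gives $P_{Y_W}(\Bn)=\sum_{a\in D_W}B_a(\Bn)$.

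I expect no real obstacle. The one point to verify carefully is that \autoref{lem:Poli-dilwort} is applied to a function that is submodular in the required sense, namely $\rho_W-1$ with value $-1$ at $\varnothing$; the constant-shift argument above handles this. A secondary point is that nonnegativity is built into $\bI$, which matches the requirement $a\in\BZ^H_{\geq0}$ in the statement.
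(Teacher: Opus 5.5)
Your proposal is correct and follows the paper's own proof. The description of $D_W$ comes from $\varphi_W=\widehat{\rho_W-1}$ (\autoref{prop:projection-rank-dilworth-truncation}) combined with \autoref{lem:Poli-dilwort}, and the Hilbert polynomial formula comes from \autoref{prop:multiplicity-free-hilbert-polynomial}, using that $Y_W$ is multiplicity-free by Li's theorem. The paper leaves implicit that $\rho_W-1$ stays submodular after the constant shift; you check this explicitly, and your check is correct.
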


\begin{proof} The first formula follows from \autoref{prop:projection-rank-dilworth-truncation} and \autoref{lem:Poli-dilwort}, whereas the second is \autoref{prop:multiplicity-free-hilbert-polynomial}, using that $Y_W$ is multiplicity-free.
\end{proof}

The following proposition characterizes geometrically $D_W$.

\begin{prop}\label{prop:interior-criterion} Let $W\subseteq V_H$ be a vector subspace with nonzero coordinate projections. Let $a\in\BZ^H_{\geq0}$. Then, for every positive $\varepsilon<1/|H|$,
\[
    a\in D_W
    \quad\Longleftrightarrow\quad
    \bP_W\cap C_{a,\varepsilon}\neq\varnothing,    
\]
where 
\[
C_{a,\varepsilon}
\coloneqq a+\varepsilon\mathbf 1 + \{q\in\BR_{\geq 0}^H\mid  q(H)=\dim W-a(H)-\varepsilon|H|\}.
\]
\end{prop}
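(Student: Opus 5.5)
Write $\rho\coloneqq\rho_W$, $d\coloneqq\dim W=\rho(H)$ and $m\coloneqq|H|$. A point of $C_{a,\varepsilon}$ has the form $x=a+\varepsilon\mathbf 1+q$ with $q\geq0$ and $x(H)=d$. So the right-hand side says there is $x\in\bP_W$ with $x\geq a+\varepsilon\mathbf 1$. By \autoref{prop:general-vector-space-hilbert-polynomial}, the left-hand side says that $a\geq0$ is integral and $a(I)\leq\rho(I)-1$ for every nonempty $I\subseteq H$. I will prove each implication directly, using \autoref{lem:continuous-polymatroid-base-extension} for the forward direction and an integrality argument for the reverse one.

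\emph{($\Rightarrow$).} Let $a\in D_W$ and put $y\coloneqq a+\varepsilon\mathbf 1$. Then $y\geq0$, and for every nonempty $I$,
\[
y(I)=a(I)+\varepsilon|I|\leq\rho(I)-1+\varepsilon m<\rho(I),
\]
because $\varepsilon m<1$. For $I=\varnothing$ both sides vanish. Hence $y\in\bI_\rho$. By \autoref{lem:continuous-polymatroid-base-extension}, whose real case needs only submodularity, there is $b\in\bP_W$ with $b\geq y$. Then $q\coloneqq b-y\geq0$ and $q(H)=d-a(H)-\varepsilon m$, so $b\in C_{a,\varepsilon}$. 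Note that $C_{a,\varepsilon}$ is empty unless $a(H)+\varepsilon m\leq d$, and this holds here because $a(H)\leq d-1$.

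\emph{($\Leftarrow$).} Suppose $x\in\bP_W$ and $x\geq a+\varepsilon\mathbf 1$. For nonempty $I$ we get
\[
a(I)+\varepsilon|I|\leq x(I)\leq\rho(I),
\]
so $a(I)<\rho(I)$ since $|I|\geq1$. Both $a(I)$ and $\rho(I)$ are integers, so $a(I)\leq\rho(I)-1$. Since $a\in\BZ^H_{\geq0}$ by hypothesis, $a\in D_W$ by the description in \autoref{prop:general-vector-space-hilbert-polynomial}.

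The proof is essentially routine. The one point needing care is the forward direction: we must get strict slack from the bound $\varepsilon|I|\leq\varepsilon m<1$, and we must apply \autoref{lem:continuous-polymatroid-base-extension} to the real point $y$ rather than to an integral one. Note that the bound $\varepsilon<1/|H|$ is used only in the forward direction; the reverse direction works for any $\varepsilon>0$.
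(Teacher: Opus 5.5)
Your proof is correct and takes essentially the same approach as the paper. In the forward direction, the strict slack $a(I)+\varepsilon|I|<\rho_W(I)$ puts $a+\varepsilon\mathbf 1$ in $\bI_W$, and \autoref{lem:continuous-polymatroid-base-extension} then extends it to a base point; in the converse, integrality of $a(I)$ and $\rho_W(I)$ gives $a(I)\leq\rho_W(I)-1$. Your side remarks, that $C_{a,\varepsilon}$ is nonempty here and that the bound on $\varepsilon$ is only needed for the forward direction, are accurate.
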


\begin{proof} 
If $a\in D_W$, then
\autoref{prop:general-vector-space-hilbert-polynomial} gives
$a(I)\leq\rho_W(I)-1$ for each nonempty $I\subseteq H$.
Thus $a(I)+\varepsilon|I|<\rho_W(I)$ for every such $I$, and hence $a+\varepsilon\mathbf1\in\bI_W$. Now, 
\autoref{lem:continuous-polymatroid-base-extension} gives
$b\in\bP_W$ with $b\geq a+\varepsilon\mathbf1$. Since
$b(H)=\rho_W(H)=\dim W$, this inequality says exactly that
$b\in C_{a,\varepsilon}$. Conversely, if
$b\in\bP_W\cap C_{a,\varepsilon}$, then
$a(I)<b(I)\leq\rho_W(I)$ for each nonempty $I\subseteq H$. Since $a(I)$ and $\rho_W(I)$ are integral,
$a(I)\leq\rho_W(I)-1$, and hence $a\in D_W$, finishing the proof.
\end{proof}

\section{Quiver representations, Mustafin varieties, and linked nets}
\label{sec:quivers-mustafin-linked-nets}

\subsection{Quiver representations}\label{subsection:quiver-representations}
A \textit{quiver} is a directed graph. All the quivers we consider have neither loops nor multiple arrows, so we may as well assume this. For a quiver $Q$, we denote its set of vertices by $Q_0$ and its set of arrows by $Q_1$. As there are no multiple arrows, we may view $Q_1$ in $Q_0\times Q_0$, associating to an arrow with tail $u$ and head $v$ the pair $(u,v)$, and denoting it by $uv$ for short. Likewise, a path in $Q$ starting at $u_0$, passing through $u_1,\dots,u_{m-1}$ in this order with no intermediate vertices, and ending at $u_m$, will be denoted $u_0u_1\cdots u_{m-1}u_m$ and said to have \emph{length} $m$. 

Let $\bk$ be a field. A (\textit{quiver}) \textit{representation} $\mathfrak{V}$ of a quiver $Q$ is the data of a $\bk$-vector space $V_v$ for each $v\in Q_0$, and a $\bk$-linear map $\varphi^u_v:V_u\to V_v$ for each $uv\in Q_1$. Given it, for each path $\gamma=u_0\cdots u_m$ in $Q$, we denote by $\varphi_\gamma$ the composition $\varphi^{u_{m-1}}_{u_m}\cdots \varphi^{u_0}_{u_1}$. If $\gamma$ is the trivial path, that is, $m=0$, we put $\varphi_\gamma=\mathrm{id}_{V_{u_{0}}}$ by convention. 

Let $H$ be a set of vertices of $Q$. We say $\mathfrak{V}$ is \textit{generated} by $H$ if, for each vertex $v$ of $Q$, there exist $w\in H$ and a path $\gamma$ connecting $w$ to $v$ such that $\varphi_\gamma$ is surjective.
If $H$ is finite, we say $\mathfrak{V}$ is \textit{finitely generated}.

\subsection{Mustafin varieties}\label{subsection:mustafin}

Let $R$ be a discrete valuation ring, $\BK$ its field of fractions, $\pi$ a uniformizer parameter, and $\bk$ the residue field.  Let $r$ be a positive integer. A \textit{lattice} in $\BK^r$ is a (necessarily free) $R$-submodule of rank $r$. For each lattice $\Lambda$, denote by $\overline{\Lambda}$ the quotient $\Lambda/\pi\Lambda $.
 
Two lattices $\Lambda_1,\Lambda_2\subseteq \BK^r$ are \textit{equivalent} if $\Lambda_1 = c\Lambda_2$ for some $c\in \BK^*$. Let $\mathcal{B}^0_r$ be the set of equivalent classes of lattices in $\BK^r$, the Bruhat--Tits building of $\mathrm{PGL}(r)$; see \cite[Def.~2.22, p.~13]{Gortz_2010}. Two lattice classes are called \textit{adjacent} if there exist  representatives $\Lambda_1$ for one and $\Lambda_2$ for the other such that $\pi \Lambda_2 \subseteq \Lambda_1 \subseteq \Lambda_2$. By convention, a lattice class is not adjacent to itself. Adjacency is a symmetric, but neither reflexive nor transitive relation.

A subset $\Gamma\subseteq \mathcal{B}^0_r$ is a \emph{lattice configuration}. It is \textit{convex} if $[\pi^a\Lambda_1\cap \pi^b\Lambda_2]\in \Gamma$ for each $[\Lambda_1],[\Lambda_2]\in\Gamma$ and $a,b\in \mathbb{Z}$. The \textit{convex hull} of $\Gamma$ is the smallest convex subset of $\mathcal{B}^0_r$ containing $\Gamma$, denoted by $\mathrm{conv}(\Gamma)$. If $\Gamma$ is finite, so is $\mathrm{conv}(\Gamma)$. \par

Let $\Gamma\subseteq \mathcal{B}^0_r$ be a finite lattice configuration. Let $\Lambda_v$ for $v\in H$ be a collection of pairwise nonequivalent lattices indexed by a set $H$ representing all the classes in $\Gamma$. For short, we write $\Gamma=\{[\Lambda_v]\,|\,v\in H\}$. The $\BP(\Lambda_v)$ for $v\in H$ are projective spaces over $R$ whose generic fibers are canonically isomorphic to $\BP^{r-1}_{\BK}$ via the inclusion $\Lambda_v\hookrightarrow\BK^r$. The open embeddings $\BP^{r-1}_{\BK}\hookrightarrow \BP(\Lambda_v)$ give rise to a map
\[
    \begin{tikzcd}
         \BP^{r-1}_{\BK} \arrow[r] & \displaystyle \prod_{v\in H}\BP(\Lambda_v),
    \end{tikzcd}
\]
where the product is fibered over $R$.

\begin{definition}\label{def:mustafin-scheme}\cite[Def 1.1, p.~758]{Cartwright2011}
    The schematic closure of the image of the above map is called the \textit{Mustafin scheme}
    associated to $\Gamma$, and denoted $\cM(\Gamma)$. Its special fiber, denoted $\cM(\Gamma)_\bk$, is the associated \emph{Mustafin variety}. 
\end{definition}

Notice that the generic fiber of $\cM(\Gamma)$ is naturally isomorphic to  $\BP^{r-1}_{\BK}$ embedded diagonally in the product of copies of $\BP^{r-1}_{\BK}$ indexed by $H$. Also, the Mustafin scheme does not depend on the choice of representative lattices $\Lambda_v$. 

In \cite{Cartwright2011}, $\cM(\Gamma)$ itself is called a Mustafin variety. It is an unnecessary misnomer that we do not adopt. The scheme $\cM(\Gamma)$ has also been called a \emph{Deligne scheme} if $\Gamma$ is convex, which is the case of interest to us. By \cite[Thm.\ 2.3, p.\ 763]{Cartwright2011}, the scheme $\cM(\Gamma)$ is integral, normal and Cohen--Macaulay, and $\cM(\Gamma)_\bk$ is reduced, Cohen--Macaulay and connected. 

Let $\Gamma=\{[\Lambda_v]\,|\,v\in H\}$ be a finite convex lattice configuration in $\mathcal{B}^0_r$. The Deligne scheme $\cM(\Gamma)$ associated to $\Gamma$ is isomorphic to a certain quiver Grassmannian over $R$, that we now describe. 

First, we define the quiver $Q(\Gamma)$ whose set of vertices $Q(\Gamma)_0$ is $H$ and whose set of arrows is the set of pairs of adjacent lattices, that is,
\[
    Q(\Gamma)_1\coloneqq \left\lbrace (u,v)\in H^2  \,\big\vert\, [\Lambda_u]\text{ and }[\Lambda_v] \text{ are adjacent}\right\rbrace.
\] 
Clearly, $Q(\Gamma)$ is a finite quiver with neither loops nor multiple edges.

Second, we define a quiver representation $\mathfrak{V}(\Gamma)$ of $Q(\Gamma)$. For each pair of vertices $v,u\in H$, let $p_{v,u}$ be the minimum integer such that $\pi^{p_{v,u}}\Lambda_v\subseteq \Lambda_u$. Denote by $G^v_{u}:\Lambda_v\to  \Lambda_u$ the $R$-linear map given by multiplying by $\pi^{p_{v,u}}$, and by $g^v_u: \overline{\Lambda_v}\to  \overline{\Lambda_u}$ the induced map. The data consisting of the $\Lambda_v$ (resp.~$\overline{\Lambda_v}$) for $v\in H$ and the maps $G^v_u$ (resp.~$g^v_u$) for $vu\in Q(\Gamma)_1$ is a quiver representation of $Q(\Gamma)$ in the category of $R$-modules (resp.~$\bk$-vector spaces), denoted by $\mathfrak{V}(\Gamma)$ (resp.~$\mathfrak{V}(\Gamma)_\bk$). 

Finally, let $\BLP\big(\mathfrak{V}(\Gamma)\big)$ (resp.~$\BLP\big(\mathfrak{V}(\Gamma)_\bk\big)$) be the quiver Grassmannian of subrepresentations of $\mathfrak{V}(\Gamma)$ (resp.~$\mathfrak{V}(\Gamma)_\bk$) by saturated $R$-submodules of rank $1$ (resp.~$\bk$-subspaces of dimension~$1$). Though $\mathfrak{V}(\Gamma)$ depends on the choice of representative lattices $\Lambda_v$, the $R$-scheme $\BLP\big(\mathfrak{V}(\Gamma)\big)$ does not. Clearly, $\BLP\big(\mathfrak{V}(\Gamma)_\bk\big)$ is the special fiber $\BLP\big(\mathfrak{V}(\Gamma)\big)_\bk$ of $\BLP\big(\mathfrak{V}(\Gamma)\big)$. Furthermore, $\BLP\big(\mathfrak{V}(\Gamma)\big)=\cM(\Gamma)$ as $R$-schemes by \cite[Cor.~4.6, p.~194]{HahnLi2021}.  

\subsection{Linked nets}\label{subsec:quivers-linked-nets}

Let $Q$ be a quiver. Let $T$ be a finite partition of the arrow set of $Q$. Each part $\mathfrak{a}\in T$ is called an \textit{arrow type}, and we say $a\in \mathfrak{a}$ \textit{has} or \textit{is of} type $\mathfrak{a}$.

Given a path $\gamma$ in $Q$ and an arrow type $\mathfrak{a}$, we denote by $\mathbf{t}_{\gamma}(\mathfrak{a})$ the number of arrows of that type that the path contains. We call the corresponding function $\mathbf{t}_{\gamma}\colon T\to\BZ$ the \textit{type} of $\gamma$, and the collection of arrow types $\lbrace \mathfrak{a}\,\vert\,\mathbf{t}_{\gamma}(\mathfrak{a})>0\rbrace$ its \textit{essential type}. The path $\gamma$ is called $\textit{admissible}$ if $\mathbf{t}_{\gamma}(\mathfrak{a})=0$ for some $\mathfrak{a}\in T$, and \textit{simple} if $\mathbf{t}_{\gamma}(\mathfrak{a})\leq 1$ for every $\mathfrak{a}\in T$. 

Let $n\coloneqq |T|-1$. We say that $T$ is a $\BZ^{n}$-\textit{structure} on $Q$ if the following four conditions are satisfied: 
\begin{enumerate}[label=(Q\arabic*), ref=(Q\arabic*), series=Q]
    \item\label{Q1} There is exactly one arrow of each type leaving each vertex.
    \item\label{Q2} Each vertex is connected to each other by an admissible path.
    \item\label{Q3} Each maximal simple path is a circuit. 
    \item\label{Q4} Two admissible paths leaving the same vertex arrive at the same vertex if and only if they are of the same type.
\end{enumerate}
A quiver with a $\BZ^n$-structure is called a $\BZ^n$-\textit{quiver}.

A maximal simple path is nonadmissible, and is called a \textit{minimal circuit}. Two distinct vertices connected by a simple path are called \textit{neighbors}. If $v_1$ and $v_2$ are neighbors, and $I$ is the essential type of a simple path connecting $v_1$ to $v_2$, we write $v_2=I\cdot v_1$.

The \textit{hull} of a collection of vertices $H$ of a $\BZ^n$-quiver $Q$ is the set $P(H)$ of all vertices $v$ of $Q$ such that for each arrow type $\mathfrak{a}$ there are $z\in H$ and a path $\gamma$ connecting $z$ to $v$ not containing any arrow of type $\mathfrak{a}$; see \cite[Def.~5.3, p.~538]{Esteves22072025}. If $P(H)=H$ then, for each vertex $u$ of $Q$, there is a unique vertex $w_u\in H$, called the \textit{shadow} of $u$ in $H$, such that for each $v\in H$ there is an admissible path connecting $v$ to $u$ passing through $w_u$; see \cite[Prop.~5.7, p.~540]{Esteves22072025}. 

Let $\bk$ be a field, $Q$ a $\BZ^n$-quiver and $\mathfrak{V}$ a representation of $Q$ by vector spaces over $\bk$. We say that $\mathfrak{V}$ is a \textit{linked net} over $Q$ if it satisfies the following conditions:
\begin{enumerate}[label=(N\arabic*), ref=(N\arabic*), series=N]
    \item\label{N1} If $\gamma_1$ and $\gamma_2$ are two paths connecting the same two vertices and $\gamma_2$ is admissible then $\varphi_{\gamma_1}$ is a scalar multiple of $\varphi_{\gamma_2}$.
    \item\label{N2} $\varphi_{\gamma}=0$ for each minimal circuit $\gamma$.
    \item\label{N3} If $\gamma_1$ and $\gamma_2$ are two admissible paths leaving the same vertex with no arrow type in common, then $\ker(\varphi_{\gamma_1})\cap \ker(\varphi_{\gamma_2})=0$. 
\end{enumerate}

Assume $\mathfrak{V}$ is a linked net. We say that $\mathfrak{V}$ is \textit{exact} if for each two vertices $u_1$ and $u_2$ of $Q$ which are neighbors, and admissible paths $\gamma_1$ and $\gamma_2$ connecting $u_1$ to $u_2$ and $u_2$ to $u_1$, respectively, we have $\im(\varphi_{\gamma_1})=\ker(\varphi_{\gamma_2})$.

If $\mathfrak{V}$ is a finitely generated representation by vector spaces of the same finite dimension, we denote by $\BLP(\mathfrak{V})$ the quiver Grassmannian of its subrepresentations by subspaces of dimension~$1$. When $\mathfrak{V}$ is a linked net, this projective variety is called the \emph{linked projective space} associated to $\mathfrak{V}$; see \cite[Def.~5.3, p.~18]{Esteves2025}.

\subsection{Linked projective spaces are locally Mustafin varieties}
\label{subsec:locally-mustafin}

Let $Q$ be a $\BZ^n$-quiver, and $H$ a set of vertices of $Q$ such that $P(H)=H$. For each $v\in H$, denote by  $R_v^H$ the set of vertices of $Q$ whose shadow in $H$ is $v$; we write $R_v$ when there is no ambiguity. We call $R_v^H$ the \textit{shadow region} of $v$ in $H$. These regions form a partition of set of vertices $Q_0$.
  
A \textit{polygon} in $Q$ is a nonempty collection of pairwise neighboring vertices of $Q$. A polygon with $m$ vertices is called a $m$-$\textit{gon}$. Given a vertex $v$ of $Q$, a way of constructing a $m$-gon containing $v$ is to pick an ordered $m$-partition $T=I_1\cup\cdots\cup I_m$ of the set of arrow types $T$ of $Q$, and put $v_1\coloneqq v$ and $v_{i+1}\coloneqq I_i\cdot v_{i}$ for $i=1,\dots,m-1$; then $\lbrace v_1,\dots,v_m\rbrace$ is a $m$-gon. We say that $v_1,\dots,v_m$ form an \textit{oriented $m$-gon}. All $m$-gons containing $v$ are of this form, for all choices of $m$-partitions of $T$.

\begin{prop}\label{prop:polygon-linked-projective-space-mustafin}
Let $\mathfrak{V}$ be an exact finitely generated linked net of vector spaces of finite dimension $r>0$ over a $\BZ^n$-quiver $Q$. If $\mathfrak{V}$ is generated by a polygon, then 
there is a finite convex lattice configuration $\Gamma\subseteq\mathcal{B}^0_r$ such that
\[
\BLP(\mathfrak{V})=\cM(\Gamma)_\bk.
\]
\end{prop}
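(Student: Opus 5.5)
The plan is to realize the linked net, restricted to its generating polygon, as the reduction of a cyclic chain of lattices in a single apartment of $\mathcal{B}^0_r$. Such a configuration of pairwise adjacent classes is automatically convex. Write the generating polygon as an oriented $m$-gon $v_1,\dots,v_m$ attached to an ordered partition $T=I_1\cup\cdots\cup I_m$, with $v_{i+1}=I_i\cdot v_i$. Let $\psi_i\colon V_{v_i}\to V_{v_{i+1}}$ be the map along a simple path of essential type $I_i$ (indices mod $m$); by \ref{N1} it is well defined up to a nonzero scalar.

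The first step is combinatorial. I would show that the composition of any $m$ consecutive maps $\psi_i$ is the map of a minimal circuit, hence zero by \ref{N2}. I would also show that, for every pair $i,j$, the map of an admissible path from $v_i$ to $v_j$ is, up to scalar, the composite $\psi_{j-1}\cdots\psi_i$. Exactness then says that each such composite has image equal to the kernel of the complementary composite $\psi_{i-1}\cdots\psi_j$.

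Next I build the lattices. Take $R=\bk\llbracket\pi\rrbracket$ and $\BK=\bk(\!(\pi)\!)$. Fix $\Lambda_1\coloneqq R^r$ together with an identification $\overline{\Lambda_1}=V_{v_1}$. For each $i$ set $K_i\coloneqq\ker(\psi_{i-1}\cdots\psi_1)\subseteq V_{v_1}$, which gives a flag $0=K_1\subseteq K_2\subseteq\cdots\subseteq K_m\subseteq V_{v_1}$. Define
\[
\Lambda_i\coloneqq \Lambda_1+\pi^{-1}\widetilde{K_i},
\]
where $\widetilde{K_i}\subseteq\Lambda_1$ is the preimage of $K_i$. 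Then $\Lambda_1\subseteq\Lambda_2\subseteq\cdots\subseteq\Lambda_m\subseteq\pi^{-1}\Lambda_1$. This is a cyclic chain, so all classes $[\Lambda_i]$ are pairwise adjacent or equal, and intersections of rescalings stay in the chain. Hence $\Gamma\coloneqq\{[\Lambda_i]\}$ is convex.

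The main obstacle is to identify $\mathfrak V(\Gamma)_\bk$ with the restriction of $\mathfrak V$ to the polygon. Concretely, I must produce isomorphisms $\overline{\Lambda_i}\simeq V_{v_i}$ carrying the reduced inclusions $g^{v_i}_{v_{i+1}}$ to the $\psi_i$, up to scalars. I would construct these isomorphisms inductively: the map $\overline{\Lambda_i}\to\overline{\Lambda_{i+1}}$ has kernel $\pi^{-1}\widetilde{K_{i+1}}/(\Lambda_i\cap\cdots)$. I would compare this kernel with $\ker\psi_i$, using exactness ($\operatorname{im}=\ker$ for the complementary composites) and dimension counts, to see that the two chains have the same kernel and image filtrations. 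This also shows that the $\Lambda_i$ are pairwise nonequivalent exactly when the $v_i$ are distinct vertices. Where this comparison is delicate, I would first try choosing a basis of $V_{v_1}$ adapted to the flag and checking everything on the coordinate lines.

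Finally, I compare the two quiver Grassmannians. Since $\mathfrak V$ is generated by $H=\{v_1,\dots,v_m\}$, a one-dimensional subrepresentation is determined by its lines at the $v_i$, subject to compatibility along admissible paths between them. By the first step, this is exactly the set of conditions imposed by the arrows of $Q(\Gamma)$, which is the complete quiver on $H$. So $\BLP(\mathfrak V)=\BLP(\mathfrak V(\Gamma)_\bk)$ inside $\BP_H$. The identification $\BLP(\mathfrak V(\Gamma))=\cM(\Gamma)$ of \cite[Cor.~4.6]{HahnLi2021}, taken on special fibers, then gives $\BLP(\mathfrak V)=\cM(\Gamma)_\bk$.
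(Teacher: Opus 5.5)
Your proposal is correct, but it organizes the argument differently from the paper. The paper first applies the structure theory of exact linked nets \cite[Prop.~10.1, Thm.~7.8]{Esteves22072025} to split $\mathfrak V$ into rank-one nets, each generated at a single vertex of the polygon. In the resulting bases every map $V_{v_i}\to V_{v_{i+1}}$ is a diagonal $0/1$ matrix $M_i$, and the lattices are explicit diagonal lattices $\bigoplus_j T^{-\alpha_{ij}}R$. Convexity is checked with the tropical criterion \cite[Lem.~4.1]{Cartwright2011}, and the comparison with \cite[Cor.~4.6]{HahnLi2021} reduces to $\overline{g_{i,i+1}}=M_i$.

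You build the same chain intrinsically from the kernel flag $K_2\subseteq\cdots\subseteq K_m$ of $V_{v_1}$. In the basis given by the paper's generators $s_j$, your $\Lambda_i$ are the paper's lattices moved by $\mathrm{diag}(\pi\,\mathrm{id}_{r_1},\mathrm{id}_{r-r_1})$. Your convexity argument is simpler than the tropical computation: all $\pi^k\Lambda_i$ lie in one chain totally ordered by inclusion, so each $\pi^a\Lambda_i\cap\pi^b\Lambda_j$ is again one of them. The price is that the identification $\overline{\Lambda_i}\simeq V_{v_i}$ must be done by hand. This amounts to reproving the cited decomposition on the polygon, using only (N1), (N2) and exactness.

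That identification needs more than matching kernels. The inductive comparison works as far as it goes. The correct kernel is $\ker(\overline{\Lambda_i}\to\overline{\Lambda_{i+1}})=\widetilde{K_{i+1}}/\widetilde{K_i}$, and $K_{i+1}/K_i\simeq\ker\psi_i$ because, by exactness, $\ker\psi_i$ lies in the image of $\psi_{i-1}\cdots\psi_1$.

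Now suppose the isomorphisms $\sigma_i\colon\overline{\Lambda_i}\to V_{v_i}$ are extended arbitrarily on complements. Consider the closing square at the arrow $\overline{\Lambda_m}\to\overline{\Lambda_1}$, the reduction of multiplication by $\pi$. Its two sides have the same kernel and the same image $K_m$, so they differ by an automorphism of $K_m$, which in general is not a scalar. The fix is your fallback, made precise:
\begin{itemize}
\item take a basis of $V_{v_1}$ adapted to the flag;
\item lift the basis vectors lying in $K_\ell$ but not in $K_{\ell-1}$ to $V_{v_\ell}$ through $\psi_m\cdots\psi_\ell$, which is possible since exactness gives $K_\ell=\operatorname{im}(\psi_m\cdots\psi_\ell)$ (for $\ell=1$, use the basis vectors outside $K_m$ themselves);
\item define every $\sigma_i$ from the images of these lifts.
\end{itemize}
These lifts are exactly the paper's generators $s_j$, and with them all squares, including the closing one, commute on the nose.

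Finally, your side remark on nonequivalence is inaccurate. The $v_i$ are always distinct, but cyclically consecutive classes $[\Lambda_i]=[\Lambda_{i+1}]$ coincide exactly when $\psi_i$ is an isomorphism, that is, when the paper's $r_{i+1}$ vanishes. In that case $\Gamma$ has fewer than $m$ elements and the corresponding factors are identified through $\psi_i$. The paper's construction has the same degenerate case.
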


\begin{proof}
Let $v_1,\dots,v_m$ be vertices of $Q$ forming an oriented $m$-gon $\Delta$ generating $\mathfrak V$. 
For convenience, put $v_{m+1}\coloneqq v_1$. Since $\mathfrak{V}$ is generated by $\Delta$, it follows from \cite[Prop.~10.1, p.~550]{Esteves22072025} that $\mathfrak{V}$ is the direct sum of exact linked nets $\mathfrak{V}_1,\dots,\mathfrak{V}_{r}$. Since $\mathfrak{V}$ is generated by $\Delta$, so are the $\mathfrak{V}_j$. By \cite[Thm.~7.8, p.~544]{Esteves22072025}, for each $j\in[r]$, the net $\mathfrak{V}_j$ is generated by $\{v_{\ell_j}\}$ for a unique $\ell_j\in[m]$; let $s_j\in V_{v_{\ell_j}}$ be a generator. Reorganize so that $\ell_1\leq\ell_2\leq\cdots\leq\ell_r$. For each $\ell\in[m]$, let $r_\ell\coloneqq |\lbrace j~\vert~ \ell_j=\ell\rbrace|$. We have $\sum r_\ell=r$. The $s_j$ induce a basis for $V_{v_i}$ for each $i\in[m]$, and thus a decomposition $V_{v_i}=V_{i,1}\oplus\cdots\oplus V_{i,m}$, where $V_{i,\ell}$ is the subspace freely generated by the $\varphi^{v_\ell}_{v_i}(s_j)$ for $j$ with $\ell_j=\ell$, for $\ell\in[m]$. Notice that $V_{i,\ell}$ has dimension $r_\ell$. 

For each $i\in[m]$, the map $\varphi^{v_i}_{v_{i+1}}$ can thus be represented by a diagonal matrix $M_i$ of size $r$. If $i<m$, all of its diagonal entries are $1$ but for those in position $r_1+\cdots+r_i+j$ for $j=1,\dots,r_{i+1}$, which are $0$. The matrix $M_m$ has all of its diagonal entries $1$ but for those in position $1,\dots,r_1$, which are $0$. 

Put $R\coloneqq\bk\llbracket T \rrbracket$. Let $\overline{x}\in\bk$ denote the residue of $x\in R$. 
Define, for each $i\in[m]$, 
$$
\alpha_i\coloneqq (\alpha_{i,1},\dots,\alpha_{i,r}),\quad \text{ where }
\alpha_{i,j}=\begin{cases}
    1 &\text{if }j\leq r_1+\cdots+r_i,\\
    0 &\text{otherwise,}
\end{cases}, \quad \text{ and }
\Lambda_i\coloneqq\bigoplus_{j=1}^{r} T^{-\alpha_{ij}}R.
$$
Put $\Gamma\coloneqq\lbrace [\Lambda_1],\dots,[\Lambda_m]\rbrace$. Let $\Lambda\coloneqq R^{r}$.

First, we verify that $\Gamma$ is convex. All lattices in $\Gamma$ are in the same apartment. Hence, it follows from \cite[Lem.~4.1, p.~773]{Cartwright2011} that $\Gamma$ is convex if and only if the classes $[\alpha_1],\dots,[\alpha_m]$ in the tropical projective space $\mathbb{TP}^{r-1}$ are the lattice points of their tropical convex hull 
\[
    C\coloneqq \mathrm{tconv}([\alpha_1],\dots,[\alpha_m])\subseteq \mathbb{TP}^{r-1}.
\]
Let $[\beta]\in C$ be a lattice point, $\beta=(\beta_1,\dots,\beta_r)\in\BZ^{r}$. Since $[\beta]$ is in $C$, the representative $\beta$ can be written as a tropical linear combination of the $\alpha_i$, that is, there are $\lambda_1,\dots,\lambda_m\in\BR$ such that
$$
\beta=\lambda_1\odot\alpha_1\oplus\cdots\oplus\lambda_m\odot\alpha_m,
$$
where $\odot$ is the tropical product and $\oplus$ is the tropical sum. 
It follows that for each $i\in[m]$ and $j=r_1+\cdots+r_{i-1}+1,\dots,r_1+\cdots+r_i$,
$$
\beta_j=\tau_i\coloneqq \min\lbrace \lambda_1,\dots,\lambda_{i-1}, \lambda_i+1,\dots,\lambda_m+1 \rbrace.
$$
Notice that
$$
\tau_1\geq\tau_2\geq\cdots\geq\tau_m\geq\tau_1-1.
$$
Let $\ell\in\{1,\dots,m\}$ be the largest integer such that $\tau_\ell=\tau_1$. Then $\tau_i=\tau_1$ for $i\leq \ell$ and $\tau_i=\tau_1-1$ for $i>\ell$. It follows that 
$\beta=(\tau_1-1)\odot\alpha_\ell$. Thus $[\beta]=[\alpha_\ell]$, as required.

It remains to identify the associated Mustafin variety. Let $Q(\Gamma)$ be the quiver associated to $\Gamma$. Any two lattices in $\Gamma$ are adjacent. Indeed, if $i\geq\ell$ then $T\Lambda_\ell\subseteq\Lambda_i\subseteq\Lambda_\ell$. For each $i\in[m]$, let $g_i$ be the diagonal matrix whose entries in the diagonal are $1$ but those in position $j$ for $j\in [r_1+\cdots +r_{i}]$, which are $T$. Notice that $g_i\Lambda_i=\Lambda$. For $i,\ell\in[m]$, let $p_{i,\ell}$ be the minimum integer such that $T^{p_{i,\ell}}\Lambda_i\subseteq\Lambda_\ell$, and put $g_{i,\ell}\coloneqq g_{\ell}T^{p_{i,\ell}}g_{i}^{-1}$. We have  
\[
    p_{i,\ell}=\left\lbrace
    \begin{array}{cc}
        0 & i\leq \ell, \\
        1 & i>\ell.
    \end{array}\right.
\] 
By \cite[Cor.~4.6, p.~194]{HahnLi2021}, 
\[
\cM(\Gamma)_\bk=\bigg\lbrace ([s_1],\dots,[s_m])\in \prod_{i=1}^m \BP^{r-1}_{\bk}\,\,\Big|\,\, \overline{g_{i,\ell}}(s_i)\wedge s_\ell=0 \text{ for all } i,\ell\in[m]\bigg\rbrace.
\]
Thus, we need only show that $\overline{g_{i,i+1}}=M_i$ for each $i\in[m]$, where $g_{m,m+1}\coloneqq g_{m,1}$. Indeed, for $i\in[m-1]$, we have 
\[
    \overline{g_{i,i+1}}=\overline{g_{i+1}}\,\overline{g_i}^{-1}=M_i,
\]
whereas for $i=m$ we have
\[
    \overline{g_{m,1}}=\overline{g_{1}Tg_m^{-1}}=M_m.
\]
This proves that $\cM(\Gamma)_\bk=\BLP(\mathfrak{V})$ and completes the proof.
\end{proof}

\begin{thm}\label{thm:linked-projective-space-snc}
    Let $\mathfrak{V}$ be a nontrivial exact finitely generated linked net of $\bk$-vector spaces over a $\BZ^n$-quiver. Then every point on $\BLP(\mathfrak{V})$ admits a Zariski open neighborhood isomorphic to an open subvariety of a Mustafin variety associated to a convex lattice configuration. In particular, $\BLP(\mathfrak{V})$ is simple normal-crossings.
\end{thm}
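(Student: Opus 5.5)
The plan is to reduce to the polygon case, \autoref{prop:polygon-linked-projective-space-mustafin}, using the shadow regions of the generating set. Let $H$ be the minimum set of generators of $\mathfrak V$, so that $P(H)=H$ and $\BLP(\mathfrak V)\subseteq\BP_H$. By the structure results of \cite{Esteves2025,Chow.class}, the irreducible components of $\BLP(\mathfrak V)$ are indexed by the vertices of $H$. Given a point $x=(x_u)_{u\in H}$, consider the set $\Delta_x\subseteq H$ of vertices whose components contain $x$. The first step is to check that $\Delta_x$ is a polygon, that is, its vertices are pairwise neighbors; this follows from the description in loc.~cit.\ of nonempty intersections of components as being indexed by polygons. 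Then consider the sub-net $\mathfrak V_{\Delta}$ generated by $\Delta\coloneqq\Delta_x$. More precisely, take the linked net over $Q$ obtained by restricting to the vertices reachable from $\Delta$ by surjective paths; this is again an exact linked net, generated by a polygon. By \autoref{prop:polygon-linked-projective-space-mustafin}, $\BLP(\mathfrak V_\Delta)=\cM(\Gamma)_\bk$ for a finite convex lattice configuration $\Gamma$.

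The second step is to compare $\BLP(\mathfrak V)$ near $x$ with $\BLP(\mathfrak V_\Delta)$. Consider the open subset $U\subseteq\BLP(\mathfrak V)$ of points $y$ such that, for each $u\in H\smallsetminus\Delta$, the coordinate $y_u$ is determined by the coordinates at $\Delta$. Concretely, $y_u$ should be the image of $y_w$ under $\varphi_\gamma$ along an admissible path $\gamma$ from some $w\in\Delta$ to $u$ (using shadows, $w=w_u^\Delta$), and we require this image to be nonzero. Openness comes from the nonvanishing condition $\varphi_\gamma(y_w)\neq0$. That $x\in U$ follows because $x$ lies on no component indexed by $u\notin\Delta$; via the description of components, this forces $x_u$ to be the propagated image from $\Delta$. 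Then the projection $\BP_H\to\BP_\Delta$ restricts to an isomorphism from $U$ onto an open subset of $\BLP(\mathfrak V_\Delta)$. Its inverse is given by the propagation maps $y\mapsto(\varphi_\gamma(y_w))_u$, and the subrepresentation conditions at vertices outside $\Delta$ hold automatically by \ref{N1}.

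The last step is simple normal crossings. Mustafin varieties of convex configurations are reduced, and by \cite{Cartwright2011} their irreducible components are smooth (they are iterated blowups of projective space, or, in the polygon case, explicit products of projective bundles, visible from the diagonal matrices $M_i$). The local equations $\overline{g_{i,\ell}}(s_i)\wedge s_\ell=0$ with diagonal $0/1$ matrices can be written in affine charts as a product of coordinates, giving $\bk\llbracket x_1,\dots,x_{d+1}\rrbracket/(x_1\cdots x_m)$. Indeed, in the polygon case with $m$ lattices, one can verify directly that a chart is $\{t_1\cdots t_m=0\}$ times affine space, generalizing $xy=0$. Smoothness of the components of $\BLP(\mathfrak V)$ then follows because they are locally components of Mustafin varieties.

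The main obstacle is the second step. One must justify that the open set $U$ exists and that the projection is an isomorphism onto an open subset of $\BLP(\mathfrak V_\Delta)$. This requires controlling the coordinates at vertices outside $\Delta$ using only the exactness and \ref{N3}, rather than just the component description. Should that fail, I would instead use the explicit local equations directly, via the splitting from \cite[Prop.~10.1]{Esteves22072025}.
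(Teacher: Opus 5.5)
Your strategy matches the paper's: reduce locally to a net generated by a polygon, apply \autoref{prop:polygon-linked-projective-space-mustafin}, then cite Faltings. The gap is that the auxiliary net you feed into that proposition is not correctly constructed. Restricting $\mathfrak V$ to the vertices reachable from $\Delta$ by surjective paths does not give a representation of the $\BZ^n$-quiver $Q$ at all. The subrepresentation of $\mathfrak V$ generated by the $V_v$, $v\in\Delta$, is not generated by $\Delta$ in the required sense either. For $u\in H-\Delta$, no path from $\Delta$ to $u$ can have a surjective map, since otherwise $H-\{u\}$ would generate $\mathfrak V$, contradicting minimality of $H$; in general its space at $u$ is even a proper subspace of $V_u$. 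So \autoref{prop:polygon-linked-projective-space-mustafin} does not apply to it. Your fallback via \cite[Prop.~10.1]{Esteves22072025} has the same defect, since that splitting is only available for nets generated by a polygon.

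The missing idea is a genuinely different net, the shadow net of \cite[Sec.~7]{Esteves2025}. To each vertex $v$ of $Q$ assign $V_{w_v}$, where $w_v$ is the shadow of $v$ in $\Delta$. To each arrow $uv$ assign $\psi^u_v\colon V_{w_u}\to V_{w_v}$, taken as the map of an admissible path from $w_u$ to $w_v$ when there is an admissible path from $w_u$ to $v$ through $u$, and zero otherwise. That this is an exact linked net over $Q$ generated by $\Delta$ is a nontrivial input, \cite[Prop.~7.8]{Esteves2025}. It agrees with $\mathfrak V$ on $\Delta$, which is all your second step really uses.

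Second, the local comparison, which you rightly flag as the main obstacle, is asserted rather than proved. The paper takes $\Delta$ to be a polygon in $Q$, not necessarily contained in $H$, that generates the subrepresentation $\mathfrak W$ corresponding to the point; it exists by \cite[Thm.~3.6]{Esteves2025}. It then imports the open neighborhood of $\mathfrak W$ isomorphic to an open subscheme of $\BLP(\mathfrak V_\Delta)$ from the proof of \cite[Thm.~8.2]{Esteves2025}.

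With your choice $\Delta=\Delta_x$ there are two unproved points:
\begin{enumerate}
\item The claim $x\in U$, that is, $\varphi^{w_u}_u(x_{w_u})\neq0$ for all $u\in H-\Delta_x$, rests only on a vague appeal to the description of the components.
\item The subrepresentation conditions for the propagated coordinates do not follow from (N1) alone, because (N1) allows the scalar to be zero. You need the defining property of shadows: from each $v\in\Delta$ there is an admissible path to $u$ passing through $w_u$.
\end{enumerate}
Your third step is fine as a citation of \cite{Faltings2001}.
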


\begin{proof}
  Let $\mathfrak{W}\in \BLP(\mathfrak{V})$. By \cite[Thm.~3.6, p.~9]{Esteves2025}, there is a polygon $\Delta$ in $Q$ generating $\mathfrak{W}$. Since $P(\Delta)=\Delta$, to each $v\in Q_0$ we may associate its shadow $w_v$ in $\Delta$. Then we may consider the representation $\mathfrak{V}_{\Delta}$ associating to each $v\in Q_0$ the vector space $V_{w_v}$, and to each arrow $uv\in Q_1$ the $\bk$-linear map $\psi^u_v\colon V_{w_u}\rightarrow V_{w_v}$, which is zero if there is no admissible path from $w_u$ to $v$ through $u$, and equal to $\varphi_{\gamma}$ otherwise, where $\gamma$ is any chosen admissible path from $w_u$ to $w_v$, as in \cite[Sec.~7, pp.~22--25]{Esteves2025}. 
  
  It follows from \cite[Prop.~7.8, p.~25]{Esteves2025} that $\mathfrak{V}_{\Delta}$ is an exact linked net of vector spaces over $Q$ generated by $\Delta$, and hence $\BLP(\mathfrak{V}_{\Delta})$ is a Mustafin variety associated to a convex lattice configuration by \autoref{prop:polygon-linked-projective-space-mustafin}. Thus $\BLP(\mathfrak{V}_{\Delta})$ is simple normal-crossings by \cite[Sec.~5, p.~167]{Faltings2001}. It is now enough to observe that it follows from the proof of \cite[Thm.~8.2, p.~28]{Esteves2025} that there is a Zariski open neighborhood of $\mathfrak{W}$ in $\BLP(\mathfrak{V})$ which is isomorphic to an open subscheme of $\BLP(\mathfrak{V}_{\Delta})$.
\end{proof}

\begin{rem} That $\BLP(\mathfrak{V})$ is simple normal-crossings may also be derived from the local equations found in the proof of \cite[Thm.~8.2, p.~28]{Esteves2025}.
\end{rem}

\section{Hilbert polynomial of a linked projective space.}\label{sec:hilbert-polynomial}

\subsection{Linked nets and polytopes}\label{subsec:linked-nets-polytopes}
Let $\bk$ be an algebraically closed field. Let $\mathfrak{V}$ be a nontrivial finitely generated exact linked net of $\bk$-vector spaces of dimension $r+1$ over a $\BZ^n$-quiver $Q$, and let $H$ be its minimum set of generators.

Let $V_v$ be the vector space associated to $v\in H$ by $\mathfrak{V}$, and put $V_H\coloneqq\bigoplus_{u\in H}V_u$. For each path $v,u\in H$, let $\varphi^v_u$ be the map associated by $\mathfrak{V}$ to a chosen path $\gamma$ in $Q$ conecting $v$ to $u$. For each polygon $\Delta\subseteq H$ and $v\in\Delta$, set
\[
V^\Delta_v\coloneqq\bigcap_{u\in H - R_v^\Delta}\ker(\varphi_u^v)\subseteq V_v,
\]
and put $V^\Delta\coloneqq\bigoplus_{v\in\Delta}V^\Delta_v$. Define $\Psi_H^\Delta:V^\Delta\to V_H$ by
\[
    \Psi_H^\Delta(s_v\,|\,v\in\Delta)=\big(\varphi_u^{w_u}(s_{w_u})\,|\,u\in H\big),
\]
where $w_u$ is the shadow of $u$ in $\Delta$. Put $W^\Delta\coloneqq\im(\Psi_H^\Delta)\subseteq V_H$, and let $\rho_\Delta\coloneqq\rho_{W^\Delta}$ and $\bP_\Delta\coloneqq\bP_{\rho_\Delta}$. Since the diagonal action of $(\bk^*)^H$ on $V_H$ preserves the rank function, $\rho_\Delta$ does not depend on the choice of the $\varphi^u_v$. 
 
By \cite[Lem.\ 4.4(3)]{Chow.class}, 
every coordinate projection $p_v\colon W^\Delta\to V_v$ is nonzero. Let $X^\Delta$ be the scheme-theoretic image of the induced rational map, 
\[
    \begin{tikzcd}
        \Phi_{\Delta}:\mathbb{P}(W^\Delta) \arrow[r, dashed] & \displaystyle\BP_H\coloneqq\prod_{u\in H}\mathbb{P}(V_u).
    \end{tikzcd}
\]
And let $D_\Delta\coloneqq D_{X^\Delta}$ be the set of lattice points of the independence polytope of (the polymatroid rank function $\varphi_{X^\Delta}$ of) $X^\Delta$; see \autoref{subsection:polytopes-vector-spaces}.

By \cite[Thm.~6.4, p.~22]{Esteves2025}, the irreducible components of
$\BLP(\mathfrak V)$ are
$\BLP(\mathfrak V)_v\coloneqq X^{\{v\}}$, for $v\in H$. For each polygon $\Delta\subseteq H$, put
\[
    \mathbb{LP}(\mathfrak{V})_\Delta\coloneqq
    \bigcap_{v\in\Delta}\BLP(\mathfrak V)_v.
\]

\begin{prop}\label{prop:polygon-intersection-hilbert-polynomial}
Let $\Delta\subseteq H$ be a polygon. Then $\mathbb{LP}(\mathfrak{V})_\Delta=X^\Delta$. In particular, $\mathbb{LP}(\mathfrak{V})_\Delta$ is a multiplicity-free subvariety of $\BP_H$.
\end{prop}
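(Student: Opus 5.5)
The plan is to prove the equality $\mathbb{LP}(\mathfrak V)_\Delta=X^\Delta$ as closed subschemes of $\BP_H$, and then deduce multiplicity-freeness from Li's theorem. The second part is immediate once the first is known: $W^\Delta\subseteq V_H$ has nonzero coordinate projections by \cite[Lem.~4.4(3)]{Chow.class}. Hence $X^\Delta=Y_{W^\Delta}$ in the notation of \autoref{subsection:polytopes-vector-spaces}, and \cite[Thm.~1.1]{Li_2018} shows it is a multiplicity-free variety. It is integral because it is the closure of the image of an irreducible reduced space. So all the work lies in the equality.

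\textbf{Step 1: the inclusion $X^\Delta\subseteq\BLP(\mathfrak V)_v$ for each $v\in\Delta$.} Since the components are closed and $X^\Delta$ is the closure of the image of $\Phi_\Delta$, it is enough to check that the image of a general point $[s]$ of $\BP(W^\Delta)$ lies in each $X^{\{v\}}$. For this I would compare $W^\Delta$ with $W^{\{v\}}$. The vector $(\varphi^v_u(s_v))_{u\in H}$ spans the image of $V_v$ under $\Psi^{\{v\}}_H$, because the shadow of every $u$ in $\{v\}$ is $v$. The key relation is that, for $s\in V^\Delta$, the coordinates $\varphi^{w_u}_u(s_{w_u})$ agree up to scalar with $\varphi^v_u(\sum_{w\in\Delta}\varphi^w_v s_w)$ on the region where the latter is nonzero. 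This should follow from \ref{N1}, exactness, and the defining kernel conditions of $V^\Delta_w$, which kill the contributions outside the shadow regions $R^\Delta_w$. From it, the point $\Phi_\Delta([s])$ is a limit of points $\Phi_{\{v\}}([t])$. Concretely, I would use a one-parameter family $t(\lambda)=\sum_w\lambda^{c_w}\varphi^w_v(s_w)$ with suitable exponents and invoke the torus invariance $\rho_{\lambda W}=\rho_W$ and properness.

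\textbf{Step 2: the reverse inclusion and scheme structure.} Here I would use \autoref{thm:linked-projective-space-snc}. The scheme $\BLP(\mathfrak V)$ is simple normal-crossings, so $\mathbb{LP}(\mathfrak V)_\Delta$ is smooth, and in particular reduced, of the expected dimension $r+1-|\Delta|$. Moreover, the components $X^\Delta$ and $\mathbb{LP}(\mathfrak V)_\Delta$ are indexed compatibly by \cite{Chow.class,Esteves2025}.

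On the other side, $X^\Delta$ is integral of dimension $\dim W^\Delta-1$. From the definition of $V^\Delta$ and exactness, I expect $\dim W^\Delta=r+2-|\Delta|$, either by computation in the local Mustafin model of \autoref{prop:polygon-linked-projective-space-mustafin} or by citing \cite{Chow.class}.

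It then suffices to show that $\mathbb{LP}(\mathfrak V)_\Delta$ is irreducible. Given that, both sides are integral closed subschemes of the same dimension, one containing the other, so they are equal. For irreducibility, I would argue that every point $\mathfrak W$ of the intersection is generated by a polygon $\Delta'\supseteq\Delta$, by \cite[Thm.~3.6]{Esteves2025}. Then, in the local Mustafin chart around $\mathfrak W$, the stratum of points generated by polygons containing $\Delta$ is dense, and it is exactly the image of $\Phi_\Delta$ restricted to an open set.

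\textbf{Main obstacle.} I expect the main difficulty to be Step 2's irreducibility and connectedness of the intersection. A priori an snc intersection of components can be disconnected, as in \autoref{Exa: K_a_disconnected}. I would first try to check it in the polygon-generated case via the explicit equations of $\cM(\Gamma)_\bk$. There, intersections of components of Mustafin varieties for convex configurations are known to be irreducible. Then I would transfer the result through the open embeddings of \autoref{thm:linked-projective-space-snc}, together with the fact that every point of the intersection lies in the closure of the $\Delta$-generated locus.
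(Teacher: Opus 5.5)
Your outer frame matches the paper's: integrality of $X^\Delta$, reducedness of $\BLP(\mathfrak V)_\Delta$ from \autoref{thm:linked-projective-space-snc}, and Li's theorem for multiplicity-freeness. But the step that actually identifies the two schemes is missing. The paper closes it by citing \cite[Prop.~6.2]{Esteves2025}, which says that $\BLP(\mathfrak V)_\Delta$ and $X^\Delta$ coincide set-theoretically. Two reduced closed subschemes with the same support are equal, so no containment argument, dimension count or irreducibility statement is needed.

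You instead need irreducibility of $\BLP(\mathfrak V)_\Delta$, which you yourself flag as the main obstacle, and you do not obtain it. The intersection is smooth by the snc property, so it is already locally irreducible. What is missing is therefore a global fact, namely connectedness, and local Mustafin charts cannot supply it. Your fallback is that the locus generated exactly by $\Delta$ is dense and equals $\Phi_\Delta$ of an open subset of $\BP(W^\Delta)$. That is just the set-theoretic identification above, asserted rather than proved.

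The supporting steps also fail as written.

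In Step~1, take $w\in\Delta\setminus\{v\}$. Then $v\in H-R^\Delta_w$, so $\varphi^w_v(s_w)=0$ by the very definition of $V^\Delta_w$. Your family is therefore $t(\lambda)=\lambda^{c_v}s_v$, which is projectively constant. Moreover $\varphi^v_u(s_v)=0$ for all $u\in H-R^\Delta_v$, so the family never reaches $\Phi_\Delta([s])$, and your ``key relation'' is vacuous off $R^\Delta_v$. A degeneration would have to go the other way: lift each $s_w$ into $V_v$ using $\ker(\varphi^w_v)=\im(\varphi^v_w)$, which exactness provides. Torus invariance of $\rho_W$ plays no role in a containment of images.

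In Step~2, both dimension claims are false.
\begin{itemize}
\item The $u$-th coordinate of $\Psi^\Delta_H(s)$ depends only on $s_{w_u}$. So $\Phi_\Delta$ is constant on orbits of $(\bk^*)^\Delta$ acting by rescaling the components, and its general fibers have dimension at least $|\Delta|-1$. Hence $\dim X^\Delta\neq\dim W^\Delta-1$ once $|\Delta|\geq2$.
\item In fact $\dim W^\Delta=r+1$. Every $q\in\bP_\Delta$ satisfies $q(H)=\dim W^\Delta$, while $\bP_\Delta\subseteq\bP_v$ (\autoref{lem:intersection-polygon}) forces $q(H)=\dim W^{\{v\}}=r+1$.
\end{itemize}
The two errors cancel to give the correct value $\dim X^\Delta=r+1-|\Delta|$, but that value needs its own argument, e.g.\ through $\varphi_{W^\Delta}=\widehat{\rho_\Delta-1}$.
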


\begin{proof} Since $\BP(W^\Delta)$ is integral, so is $X^\Delta$. On the other hand, since $\BLP(\mathfrak V)$ is normal-crossings by 
\autoref{thm:linked-projective-space-snc}, every
scheme-theoretic intersection of its irreducible components, and in
particular $\mathbb{LP}(\mathfrak{V})_\Delta$, is reduced. Finally, it follows from \cite[Prop.~6.2, p.~20]{Esteves2025} that $\mathbb{LP}(\mathfrak{V})_\Delta$ and $X^\Delta$ coincide set-theoretically. Hence, $\mathbb{LP}(\mathfrak{V})_\Delta=X^\Delta$.

The second assertion follows from the first and \cite[Thm.~1.1, p.~4191]{Li_2018}. 
\end{proof}

We need two lemmas before stating the main result of this article. 

\begin{lemma}\label{lem:intersection-polygon}
Let $v_1,\dots,v_m$ forming an oriented $m$-gon $\Delta\subseteq H$. For each $i\in[m]$, let $\pi_i \coloneqq R^{\Delta}_{v_i}\cap H$, the shadow region in $H$ of $v_i$ in $\Delta$. Put $F_j\coloneqq \pi_1\cup\cdots\cup \pi_j$ and $F_j^c\coloneqq H - F_j$ for each $j\in[m]$. Then
\[
    \bP_\Delta=\bP_{v_1}\cap \left\lbrace q\in\BR_{\geq0}^H\,\middle|\, q(F_j^c)=\rho_{v_1}(F_j^c) ~ \forall j\in[m-1]\right\rbrace.
\]
In addition, we have \[\bP_{\Delta}=\bigcap_{v\in \Delta} \bP_v.\] 
\end{lemma}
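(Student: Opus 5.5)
The plan is to make the rank functions $\rho_\Delta$ and $\rho_v\coloneqq\rho_{\{v\}}$ explicit, and then compare base polytopes by identifying which subsets are tight. First I would describe $W^\Delta$ concretely. By construction it is the image of $\Psi^\Delta_H$ on $\bigoplus_{i}V^\Delta_{v_i}$, so it is a direct sum of the images of the pieces $V^\Delta_{v_i}$, and the piece coming from $v_i$ is supported on coordinates in $H$. Using exactness of $\mathfrak V$ and the decomposition used in the proof of \autoref{prop:polygon-linked-projective-space-mustafin} (the splitting of the net generated by $\Delta$ into rank-one nets generated at single vertices), I expect
\[
\rho_\Delta(I)=\sum_{i=1}^m \dim\theta_I\bigl(\Psi^\Delta_H(V^\Delta_{v_i})\bigr).
\]
This gives a modular-pair, direct-sum structure. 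The goal is to show that
\[
\rho_\Delta(I)=\rho_{v_1}(I)\quad\text{whenever } I\supseteq F_j^c \text{ or } I\subseteq F_j^c \text{ in a suitable sense,}
\]
and more precisely that $\rho_\Delta$ is the restriction–contraction of $\rho_{v_1}$ along the chain $F_1^c\supseteq F_2^c\supseteq\cdots\supseteq F_{m-1}^c$. The general polymatroid fact I would use is this: if $\rho=\rho_{v_1}$ and $S_1\supseteq\cdots\supseteq S_{m-1}$, then the face of $\bP_\rho$ cut out by $q(S_j)=\rho(S_j)$ for all $j$ is the base polytope of
\[
\rho'(I)=\sum_j\Bigl[\rho\bigl((I\cap S_j)\cup S_{j+1}\bigr)-\rho(S_{j+1})\Bigr],
\]
taken with $S_0=H$ and $S_m=\varnothing$. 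This is the standard description of faces of base polytopes as direct sums of minors along a chain of tight sets; see Schrijver, Ch.~44.

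So the first displayed identity reduces to checking $\rho_\Delta=\rho'$ for this chain. I would do this layer by layer. For each $j$, the layer $F_j^c\smallsetminus F_{j+1}^c=\pi_{j+1}$ is contributed exactly by $V^\Delta_{v_{j+1}}$, and the minor
\[
\rho_{v_1}\bigl((\cdot)\cup F_{j+1}^c\bigr)-\rho_{v_1}(F_{j+1}^c)
\]
restricted to $F_j^c$ should equal the rank function of the projection of $\Psi^\Delta_H(V^\Delta_{v_{j+1}})$. The idea is that $V_{v_1}$ maps via $\varphi^{v_1}_{v_{j+1}}$ onto a space whose kernel data on the later regions match $V^\Delta_{v_{j+1}}$; this comes from exactness and the shadow property (every admissible path from $v_1$ into $\pi_{j+1}$ passes through $v_{j+1}$). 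I expect this step to be the main obstacle: identifying quotients and kernels of $\theta_{F_j^c}(W^{\{v_1\}})$ with $V^\Delta_{v_{j+1}}$ requires careful use of (N1)–(N3), exactness, and \cite[Lem.~4.4]{Chow.class}. I would first try to verify it in the diagonal bases from the proof of \autoref{prop:polygon-linked-projective-space-mustafin}, which applies locally after passing to $\mathfrak V_\Delta$ as in \autoref{thm:linked-projective-space-snc}.

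For the second statement, the inclusion $\bP_\Delta\subseteq\bP_{v_1}$ now follows, and by cyclically relabelling the oriented polygon (starting it at $v_i$) the same argument gives $\bP_\Delta\subseteq\bP_{v_i}$ for every $i$. Hence $\bP_\Delta\subseteq\bigcap_{v}\bP_v$. For the reverse inclusion, take $q\in\bigcap_v\bP_v$. I would show that $q(F_j^c)=\rho_{v_1}(F_j^c)$ for each $j$. Writing $F_j^c=\pi_{j+1}\cup\cdots\cup\pi_m$, the inequality $q(F_j)\le\rho_{v_{j+1}}(F_j)$ from $q\in\bP_{v_{j+1}}$ should give the complementary bound
\[
q(F_j^c)\ge \rho_{v_{j+1}}(H)-\rho_{v_{j+1}}(F_j)=r+1-\rho_{v_{j+1}}(F_j).
\]
The remaining numerical identity to check is $r+1-\rho_{v_{j+1}}(F_j)=\rho_{v_1}(F_j^c)$. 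This should come from exactness, since both sides count the rank-one summands generated in $\pi_{j+1}\cup\cdots\cup\pi_m$. Combined with $q(F_j^c)\le\rho_{v_1}(F_j^c)$, this forces equality, and the first statement then yields $q\in\bP_\Delta$.
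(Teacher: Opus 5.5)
There is a genuine gap in your treatment of the first statement. The reduction to chain faces is fine. After it, however, everything rests on one identification: for each $j$, the minor of $\rho_{v_1}$ obtained by restricting to $F_j^c$ and contracting $F_{j+1}^c$ must coincide with the rank function of $\Psi^\Delta_H(V^\Delta_{v_{j+1}})$ on $\pi_{j+1}$. That identification is the whole content of the statement, and you leave it as an expectation.

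The route you suggest for it also fails. The rank $\rho_{v_1}(I)=r+1-\dim\bigcap_{u\in I}\ker\varphi^{v_1}_u$ involves the maps of $\mathfrak V$ into every $u\in H$. Since $\mathfrak V$ is generated by $H$ and not by a polygon, there is no rank-one splitting in diagonal bases. Passing to $\mathfrak V_\Delta$ replaces $V_u$ by $V_{w_u}$ and so changes these ranks; for instance $\rho_{v_1}(\{u\})$ becomes the rank of $\varphi^{v_1}_{w_u}$. The local isomorphism in the proof of \autoref{thm:linked-projective-space-snc} only concerns a neighbourhood of one point of $\BLP(\mathfrak V)$, not these global polymatroids.

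The paper obtains the first statement by citing \cite[Lem.~4.4(2)]{Chow.class} together with \cite[Prop.~2.5]{amini2024tropicalizationlinearseriestilings}. To prove it directly, put $K_{j+1}\coloneqq\bigcap_{u\in F_{j+1}^c}\ker\varphi^{v_1}_u$ and check three things.
(i) For $u\in\pi_i$ with $i\geq j+1$, $\varphi^{v_1}_u$ is a nonzero multiple of $\varphi^{v_{j+1}}_u\varphi^{v_1}_{v_{j+1}}$, or both vanish. To see this, reroute the admissible path through the shadow $v_i$ along $v_1,\dots,v_i$, using (Q4) and (N1).
(ii) For $u\in F_j$, $\varphi^{v_{j+1}}_u\varphi^{v_1}_{v_{j+1}}=0$. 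Reroute through $v_{j+1},\dots,v_m,v_1,\dots,v_i$; the composite then factors through the minimal circuit $v_1,\dots,v_m,v_1$, and (N2) applies.
(iii) For $j\geq1$, $V^\Delta_{v_{j+1}}\subseteq\ker\varphi^{v_{j+1}}_{v_1}=\operatorname{im}\varphi^{v_1}_{v_{j+1}}$, by exactness.
Together these give $\varphi^{v_1}_{v_{j+1}}(K_{j+1})=V^\Delta_{v_{j+1}}$, which is the needed identification. (Separately, your direct-sum formula for $\rho_\Delta$ needs no exactness: $\Psi^\Delta_H(V^\Delta_{v_i})$ is supported on $V_{\pi_i}$, and the $\pi_i$ are disjoint.)

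Your argument for the second statement takes a different route from the paper's, and a more elementary one once the first statement is available. The paper handles $m=2$ via the tiling \cite[Thm.~5.2]{Chow.class}, disjointness of relative interiors, and the facet property of $\bP_\Delta$, and then reduces to $2$-gons. Your cyclic relabelling plus the squeeze $r+1-\rho_{v_{j+1}}(F_j)\leq q(F_j^c)\leq\rho_{v_1}(F_j^c)$ avoids the tiling entirely.

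Your justification of $\rho_{v_1}(F_j^c)+\rho_{v_{j+1}}(F_j)=r+1$ by counting rank-one summands has the same defect as above, since no such summands exist for $\mathfrak V$. Derive the identity instead from the first statement applied to the $2$-gon $\Gamma_j=\{v_1,v_{j+1}\}$ in both orientations; its shadow regions in $H$ are $F_j$ and $F_j^c$. The polytope $\bP_{\Gamma_j}$ is nonempty by \autoref{lem:continuous-polymatroid-base-extension}, and any $q$ in it satisfies $q(F_j^c)=\rho_{v_1}(F_j^c)$, $q(F_j)=\rho_{v_{j+1}}(F_j)$ and $q(H)=r+1$.
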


\begin{proof}
The first statement follows from
\cite[Lem.~4.4(2)]{Chow.class} and \cite[Prop.~2.5]{amini2024tropicalizationlinearseriestilings}. 

As for the second statement, assume first that $m=2$. By \cite[Lem.~5.1]{Chow.class} and \cite[Prop.~3.4(I)(3)]{amini2024tropicalizationlinearseriestilings}, the relative interiors of  $\bP_{v_1}$ and $\bP_{v_2}$ are disjoint. Since the polytopes $\bP_v$ form a tiling by \cite[Thm.~5.2]{Chow.class}, the intersection $\bF\coloneqq \bP_{v_1}\cap \bP_{v_2}$ is a proper common face of  $\bP_{v_1}$ and $\bP_{v_2}$. On the other hand, by \cite[Lem.~4.4(1)(2)]{Chow.class}, $\bP_\Delta$ is a common facet of $\bP_{v_1}$ and $\bP_{v_2}$. Since $\bP_\Delta\subseteq\bF$ and  $\bF$ is proper, we conclude that $\bF=\bP_\Delta$. 

We consider now the general case. 
Since $\Delta$ is a polygon, so is $\Gamma_j=\lbrace v_1,v_{j+1}\rbrace$ for each $j\in[m-1]$. Also, $R^{\Gamma_j}_{v_1}\cap H=F_j$ and  $R^{\Gamma_j}_{v_{j+1}}\cap H=F_j^c$. Thus, 
by the first statement, 
\[  
    \bP_{v_1}\cap \left\lbrace q\in\BR_{\geq0}^H\,\middle|\,q(F^c_j)=\rho_{v_1}(F_j^c)\right\rbrace = \bP_{\Gamma_j}. 
\] 
Hence, using the first statement and the second for $m=2$, we have
\[
\bP_\Delta=\bigcap_{j=1}^{m-1}  \bP_{\Gamma_j} =\bigcap_{j=1}^{m-1}  \bP_{v_1}\cap \bP_{v_{j+1}} =\bigcap_{j=1}^{m} \bP_{v_j},
\]
as required.
\end{proof}
For each nonempty polytope \(\bP\subseteq\bR^H\), denote by \(\operatorname{dir}(\bP)\) the linear space parallel to its affine span, equivalently
\[
\operatorname{dir}(\bP)\coloneqq \operatorname{span}\{x-y\mid x,y\in \bP\}.
\]
\begin{lemma}\label{P_Delta-gives-Delta}
    Put $\Omega_R\coloneqq\{x\in\BR_{\geq0}^H\mid x(H)=R\}$, where $R\coloneqq r+1$.
    The following statements hold: \begin{enumerate}
        \item If $\Delta,\Delta'\subseteq H$ are polygons, and $\bP_\Delta=\bP_{\Delta'}$, then $\Delta=\Delta'$.
        \item If $\Sigma\subseteq H$ is nonempty subset such that
        \[
            \Big(\bigcap_{v\in\Sigma}\bP_v \Big)\cap~ \operatorname{relint}(\Omega_R)\neq\varnothing
        \]
        then $\Sigma$ is a polygon.
    \end{enumerate}
\end{lemma}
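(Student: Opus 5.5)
For part (1), I will compare dimensions and then recover the vertex set of the polygon from the face structure of the tiling. By \autoref{lem:intersection-polygon}, $\bP_\Delta=\bigcap_{v\in\Delta}\bP_v$. By \cite[Thm.~5.2]{Chow.class}, the polytopes $\bP_v$ for $v\in H$ tile $\Omega_R$ and pairwise have disjoint relative interiors. So it is enough to show that, for a polygon $\Delta$, the set $\{v\in H\mid \bP_\Delta\subseteq\bP_v\}$ is exactly $\Delta$. The inclusion $\supseteq$ is immediate. For the reverse, I will use the first formula of \autoref{lem:intersection-polygon}. It shows that $\bP_\Delta$ is cut out of $\bP_{v_1}$ by the $m-1$ equalities $q(F_j^c)=\rho_{v_1}(F_j^c)$, for a flag $F_1\subsetneq\cdots\subsetneq F_{m-1}$. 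By \cite[Lem.~4.4]{Chow.class} these equalities are independent, so $\dim\bP_\Delta=|H|-m$, where $\bP_v$ is full-dimensional in $\Omega_R$, of dimension $|H|-1$. Now suppose $\bP_\Delta=\bP_{\Delta'}$. Then $|\Delta|=|\Delta'|$, and $\bP_\Delta\subseteq\bP_{v'}$ for each $v'\in\Delta'$. A face of codimension $m-1$ in a tiling by polytopes of this type, whose local structure near a relative-interior point is that of the normal fan of a polymatroid, lies in at most $m$ tiles. Indeed, near a point of $\operatorname{relint}(\bP_\Delta)$ in $\operatorname{relint}(\Omega_R)$, the $m$ tiles $\bP_{v_i}$ already cover a neighbourhood. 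This holds because the cones cut out by the flag conditions of the $m$ cyclic orderings fill space, so any other tile containing $\bP_\Delta$ would overlap one of them in relative interior. Hence $\Delta'\subseteq\Delta$, and the two are equal. If $\bP_\Delta$ lies on the boundary of $\Omega_R$, I will instead use the $\operatorname{dir}(\bP_\Delta)$ data. The directions $\operatorname{dir}(\bP_\Delta)$ determine the flag $\{F_j\}$ up to cyclic rotation, as the span of the indicator vectors $\mathbf 1_{F_j^c}$ orthogonal to it. The flag together with one vertex determines $\Delta$ through the shadow regions $\pi_i$.

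For part (2), I will take $x\in\bigcap_{v\in\Sigma}\bP_v\cap\operatorname{relint}(\Omega_R)$ and consider the cell $\bF$ of the tiling containing $x$ in its relative interior. Since the tiling is polyhedral, $\bF$ is a face of each $\bP_v$ with $v\in\Sigma$. The plan is to show that $\bF=\bP_\Delta$ for some polygon $\Delta$, which I expect to read from the tiling description of faces in \cite{Chow.class}. Granting this, $x\in\bP_v$ forces $v\in\Delta$ by the local covering argument of part (1), so $\Sigma\subseteq\Delta$. A nonempty subset of a polygon consists of pairwise neighbouring vertices, so $\Sigma$ is a polygon. An alternative route avoids the full face classification. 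Take $u,v\in\Sigma$; it suffices to show they are neighbours. Since $\bP_u\cap\bP_v$ meets $\operatorname{relint}(\Omega_R)$, I would use \cite[Prop.~3.4]{amini2024tropicalizationlinearseriestilings} or \cite[Lem.~5.1]{Chow.class} to conclude that the tiles of non-neighbouring vertices meet only on the boundary of $\Omega_R$. The idea is that an admissible path through a third vertex separates them, forcing some coordinate to vanish.

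The main obstacle is this pairwise statement in part (2). That is, non-neighbouring $u,v$ have $\bP_u\cap\bP_v\subseteq\partial\Omega_R$. I would attack it by writing $\rho_u$ and $\rho_v$ through exactness. If $u,v$ are not neighbours, there is $w$ in the hull between them, and I expect a subset $I$ with $\rho_u(I)+\rho_v(I^c)<R$ in a way that forces $q(J)=0$ for some nonempty $J$ on the intersection. The dimension count in part (1) is routine by comparison.
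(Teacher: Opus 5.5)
\textbf{There is a genuine gap: the local covering claim in part (1) is asserted, not proved, and it is false for polymatroidal tilings in general.}

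Your part (1), and your first route to part (2), rest on the claim that near $x\in\operatorname{relint}(\bP_\Delta)\cap\operatorname{relint}(\Omega_R)$ the tiles $\bP_{v_1},\dots,\bP_{v_m}$ already cover a neighbourhood. What you actually justify is weaker: the $m$ rotated flag cones cover the hyperplane $y(H)=0$, which is true by the cycle lemma. But the tangent cone of $\bP_{v_i}$ at $x$ is cut out by \emph{all} sets tight at $x$. It equals the rotated flag cone only if the chain sets are the only sets tight along $\bP_\Delta$.

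Neither \autoref{lem:intersection-polygon} nor $\dim\bP_\Delta=|H|-m$ gives this. Together they only force every tight set to be a union of blocks $\pi_k$. For $m=3$, nothing you use excludes $\rho_{v_1}(\pi_2)+\rho_{v_1}(\pi_3)=\rho_{v_1}(\pi_2\cup\pi_3)$, which makes $\pi_2$ tight as well. Writing $z_k=y(\pi_k)$, this shrinks the cone of $\bP_{v_1}$ from $\{z_3\leq 0,\ z_2+z_3\leq 0\}$ to $\{z_2\leq 0,\ z_3\leq 0\}$. That leaves room for tiles not indexed by $\Delta$.

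This really happens in polymatroidal tilings. The lines $q_i=1$ cut $\Omega_3\subseteq\BR^3$ into six polymatroid base polytopes, all containing the codimension-$2$ face $\{(1,1,1)\}$. So ``at most $m$ tiles'' is not a feature of tilings of this type. For the linked-net tiling it is essentially the content of the lemma itself. Proving it needs information about the subspaces $W_v$, which your proposal never brings in.

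\textbf{Your boundary case also fails.} The set $\operatorname{dir}(\bP_\Delta)=\{q\in\BR^H\mid q(\pi_j)=0\ \forall j\}$ is symmetric in the blocks. It recovers only the unordered partition, not the flag up to rotation. Moreover, $\Delta$ and $\Delta'$ are not known to share a vertex. Each block $\pi_i$ contains one vertex $v_i\in\Delta$ and one vertex $u_i\in\Delta'$, and $u_i=v_i$ is exactly what must be shown.

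\textbf{How the paper proves (1).} It argues algebraically, with no interior/boundary split.
\begin{enumerate}
\item It factors $\bP_\Delta=\prod_i\bP_{S_i}$ and $\bP_{\Delta'}=\prod_i\bP_{S'_i}$, where $S_i=W_{v_i}\cap V_{\pi_i}$ and $S'_i=W_{u_i}\cap V_{\pi_i}$. This gives $\bP_{S_i}=\bP_{S'_i}$, with both simple by the dimension count.
\item It takes $c_i$ with $c_iW_{v_i}\subseteq W_{u_i}$, $c_i(u_i)\neq0$, and $c_i(v_i)=0$ if $v_i\neq u_i$, by \cite[Lem.~5.1]{Chow.class}.
\item By \cite[Prop.~2.2]{Chow.class}, $c_i|_{\pi_i}$ must be invertible, so $c_i(v_i)\neq 0$ and hence $v_i=u_i$.
\end{enumerate}

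\textbf{How the paper proves (2).} With (1) in hand, no covering argument is needed. Applying \cite[Prop.~4.5]{Chow.class} to each $v\in\Sigma$ separately gives a polygon $\Delta_v\ni v$ with $\bP_{\Delta_v}=\bigcap_{w\in\Sigma}\bP_w$. Part (1) then makes all the $\Delta_v$ equal, so $\Sigma$ lies in a single polygon.

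Your pairwise route to (2) is only a sketch. Note also that an inequality $\rho_u(I)+\rho_v(I^c)<R$ would make $\bP_u\cap\bP_v$ empty, not merely contained in the boundary.
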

\begin{proof} Let $v_1,\dots,v_m$ forming an oriented polygon $\Delta$. Let $\pi_i\coloneqq R^\Delta_{v_i}\cap H$ and $F_i\coloneqq\pi_1\cup\cdots\cup\pi_i$ for each $i\in[m]$, and put $\pi\coloneqq(\pi_1,\dots,\pi_m)$. Then by \autoref{lem:intersection-polygon}, we have
\[
    \bP_\Delta=\bP_{v_1}\cap\{q\in\BR_{\geq0}^H\,|\,q(F_j^c)=\rho_{v_1}(F_j^c)\,\forall j\in [m-1]\}.
\]
Moreover, by \cite[Lem.~4.4]{Chow.class}, the polytope $\bP_\Delta$ has codimension $m-1$ in $\Omega_R$. Hence the affine span of  $\bP_\Delta$ is obtained from the affine span of $\Omega_R$ by imposing the equations $q(F_j^c)=0$ with $j\in[m-1]$. Equivalently, 
\[
    \operatorname{dir}(\bP_\Delta)=\left\{q\in\BR^H\,|\,q(\pi_j)=0\,\forall j\in [m]\right\}.
\]

Let now $\Delta'$ be another polygon such that $\bP_\Delta=\bP_{\Delta'}$, and let us proof that $\Delta'=\Delta$. First, $\operatorname{dir}(\bP_\Delta)=\operatorname{dir}(\bP_{\Delta'})$, and thus the unordered partitions induced by $\Delta$ and $\Delta'$ are equal. For each $i\in[m]$, let $u_i$ be the vertex in $\Delta'\cap\pi_i$, and put $S_i\coloneqq W_{v_i}\cap V_{\pi_i}$ and $S'_i\coloneqq W_{u_i}\cap V_{\pi_i}$. Then 
\[
\prod_{i=1}^m\bP_{S_i}=\bP_{\Delta}=\bP_{\Delta'}=\prod_{i=1}^m\bP_{S'_i}
\]
and thus $\bP_{S_i}=\bP_{S'_i}$ for each $i\in[m]$. Furthermore, since $\bP_{\Delta}$ has codimension $m-1$ in $\Omega_R$, we must have that $S_i$ and $S'_i$ are simple for each $i\in[m]$, that is, their polytopes have dimension $|\pi_i|-1$; see \cite[Sec.~2.5]{Chow.class}

Now, by \cite[Lem.~5.1]{Chow.class}, for each $i\in[m]$, there exists a nonzero $c_i\in\bk^H$ such that $c_iW_{v_i}\subseteq W_{u_i}$. Furthermore, by the proof of loc.~cit, we may choose $c_i$ such that $c_i(u_i)\neq 0$, and $c_i(v_i)=0$ if $v_i\neq u_i$. Now, $c_i|_{\pi_i}S_i\subseteq S'_i$. By \cite[Prop.~2.2]{Chow.class}, since $\bP_{S}=\bP_{S'}$, and $c_i(u_i)\neq 0$, we must have that $c_i|_{\pi_i}$ is invertible, whence $c_i(v_i)\neq 0$, and thus $v_i=u_i$. So $\Delta'=\Delta$.

Consider now a subset $\Sigma\subseteq H$ as in the second statement, and put $\bF\coloneqq\bigcap_{v\in\Sigma}\bP_v$. By \cite[Thm.~5.2]{Chow.class} the $\bP_v$ form a polyhedral tiling. Then $\bF$ is a face of $\bP_v$ for each $v\in \Sigma$. Thus, by \cite[Prop.~2.7]{amini2024tropicalizationlinearseriestilings}, for each $v\in V$, there is a partition $\pi_v=(\pi_{v,1},\dots,\pi_{v,m})$ of $H$  such that 
\[
\bF=\bP_v\cap \lbrace q\in\BR_{\geq0}^H\,|\,q(F_j^c)=\rho_v(F_j^c) \,\forall j\in [m-1]\rbrace,
\]
where $F_j\coloneqq\pi_{v,1}\cup\cdots\cup\pi_{v,j}$ for each $j\in[m]$. Since $\bF\cap\operatorname{relint}(\Omega_R)\neq\varnothing$, by \cite[Prop.~4.5]{Chow.class} there is a $m$-gon $\Delta_v$ such that $v\in\Delta_v$ and $\bP_{\Delta_v}=\bF$. In particular, $\bP_{\Delta_v}=\bP_{\Delta_w}$, and thus, by the first statement, $\Delta_v=\Delta_w$ for each $v,w\in\Sigma$. Let $\Delta$ be the common polygon. By construction, for each $v\in\Sigma$, we have $v\in \Delta_v=\Delta$. Then $\Sigma\subseteq\Delta$. Since $\Delta$ is a polygon, so is $\Sigma$. This finishes the proof of the lemma.
\end{proof}

\begin{thm}\label{thm:multivariate-hilbert-polynomial}
Let $\mathfrak{V}$ be a nontrivial exact finitely generated linked net of
$\bk$-vector spaces of dimension $r+1$ over a $\BZ^n$-quiver $Q$, and let $H$ be
its minimum set of generators. Then
\[
    P_{\BLP(\mathfrak V)}\bigl(\Bn\bigr)=\binom{r+\sum_{u\in H}n_u}{r}.
\]
In words, $\BLP(\mathfrak V)$ has the multivariate Hilbert polynomial of the small
diagonal $\BP^r\hookrightarrow(\BP^r)^H$.
\end{thm}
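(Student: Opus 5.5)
We combine \autoref{thm:linked-projective-space-snc}, \autoref{prop:polygon-intersection-hilbert-polynomial} and \autoref{prop:normal-crossings-euler-cancellation}. By \autoref{thm:linked-projective-space-snc}, $X\coloneqq\BLP(\mathfrak V)$ is simple normal-crossings. Its components are the $X^{\{v\}}$ for $v\in H$. A nonempty intersection $X_\Sigma$ of components is, set-theoretically, nonempty only when $\Sigma$ is a polygon; we take this from \cite{Esteves2025}, or recover it from the argument below. In that case it equals $X^\Sigma$, which is multiplicity-free. Hence the first formula of \autoref{prop:normal-crossings-euler-cancellation} gives
\[
P_X(\Bn)=\sum_{a\in\BZ_{\geq0}^H}\chi(K_a)B_a(\Bn),
\]
where $K_a$ is the complex of polygons $\Delta$ with $a\in D_\Delta$. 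The plan is to show that $\chi(K_a)=1$ if $a(H)\leq r$ and $\chi(K_a)=0$ otherwise. Then a binomial identity finishes the proof.

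\textbf{Step 1: identify $K_a$ geometrically.} Since $\dim W^\Delta=r+1=R$ (each $\bP_\Delta$ lies in $\Omega_R$), \autoref{prop:interior-criterion} gives
\[
a\in D_\Delta\iff \bP_\Delta\cap C_{a,\varepsilon}\neq\varnothing.
\]
By \autoref{lem:intersection-polygon}, $\bP_\Delta=\bigcap_{v\in\Delta}\bP_v$. So $K_a$ consists of the nonempty polygons $\Delta$ with $\bigcap_{v\in\Delta}(\bP_v\cap C_{a,\varepsilon})\neq\varnothing$.

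Suppose $a(H)\leq r$. Then $C_{a,\varepsilon}$ is a nonempty closed simplex contained in $\operatorname{relint}(\Omega_R)$, since all coordinates are at least $\varepsilon>0$. By \autoref{P_Delta-gives-Delta}(2), any $\Sigma\subseteq H$ with $\bigcap_{v\in\Sigma}\bP_v\cap C_{a,\varepsilon}\neq\varnothing$ is automatically a polygon. Hence $K_a$ is exactly the nerve of the closed convex cover
\[
\{\bP_v\cap C_{a,\varepsilon}\}_{v\in H}
\]
of $C_{a,\varepsilon}$. It is a cover because the $\bP_v$ tile $\Omega_R$ \cite[Thm.~5.2]{Chow.class}. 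By the Nerve Theorem \cite{BauerKerberRollRolle_2023}, $K_a\simeq C_{a,\varepsilon}$, which is contractible, so $\chi(K_a)=1$.

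Suppose instead $a(H)>r$. Then $a(H)\geq R$, so $C_{a,\varepsilon}=\varnothing$ and $K_a$ has no nonempty face, giving $\chi(K_a)=0$.

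This step is the main obstacle. Two points need care. First, every cell of the nerve must come from a polygon, so that the nerve and $K_a$ agree; \autoref{P_Delta-gives-Delta}(2) provides this. Second, the tiling must cover $C_{a,\varepsilon}$ entirely. I would check that the tiling of \cite{Chow.class} covers all of $\Omega_R$, not merely its interior.

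\textbf{Step 2: sum.} With these values of $\chi(K_a)$,
\[
P_X(\Bn)=\sum_{a\geq 0,\ a(H)\leq r}\prod_{u\in H}\binom{n_u+a_u-1}{a_u}.
\]
Group the terms by $k=a(H)$. By the multivariate Chu--Vandermonde identity \cite{Vignat_Moll_2015},
\[
\sum_{a(H)=k}\prod_{u\in H}\binom{n_u+a_u-1}{a_u}=\binom{N+k-1}{k},\qquad N\coloneqq\sum_{u\in H} n_u.
\]
This is the coefficient of $t^k$ in $\prod_{u}(1-t)^{-n_u}$. Finally, the hockey-stick identity gives
\[
\sum_{k=0}^r\binom{N+k-1}{k}=\binom{N+r}{r},
\]
which is the claimed formula.
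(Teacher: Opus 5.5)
Your proposal is correct and follows essentially the same route as the paper. That route is: simple normal crossings with multiplicity-free polygon strata; identification of $K_a$ (for $a(H)\leq r$) with the nerve of the closed convex cover $\{\bP_v\cap C_{a,\varepsilon}\}_{v\in H}$ via \autoref{prop:interior-criterion}, \autoref{lem:intersection-polygon} and \autoref{P_Delta-gives-Delta}(2); the Nerve Theorem; then Chu--Vandermonde plus hockey-stick. Your treatment of $a(H)>r$ through the emptiness of $C_{a,\varepsilon}$ is an equivalent variant of the paper's appeal to \autoref{prop:general-vector-space-hilbert-polynomial}.

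One caveat: the fact that $X_\Sigma\neq\varnothing$ only for polygons $\Sigma$ cannot be recovered from your nerve argument. That argument involves only the polytopes $\bP_v$ and says nothing about whether a stratum indexed by a non-polygon is empty or multiplicity-free, so the citation to \cite{Esteves2025} is genuinely needed, as in the paper.
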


\begin{proof} The hypotheses of
\autoref{prop:normal-crossings-euler-cancellation} are verified for $Y\coloneqq\BLP(\mathfrak V)$. Indeed, $\BLP(\mathfrak V)$ is simple normal-crossings by
\autoref{thm:linked-projective-space-snc}, and its nonempty strata are the
$Y_\Delta\coloneqq \BLP(\mathfrak V)_\Delta$ for polygons $\Delta\subseteq H$, hence multiplicity-free varieties by \autoref{prop:polygon-intersection-hilbert-polynomial}.

For each $a\in\BZ_{\geq0}^H$, let $K_a$ be the simplicial complex on
$E\coloneqq E_Y$ whose nonempty faces are the nonempty $\Delta\subseteq E$ such that $Y_\Delta\neq\varnothing$ and $a\in D_{Y_\Delta}$. By  \cite[Prop.~6.3, p.~21]{Esteves2025}, identifying $E_Y$ with $H$, we have that $Y_\Delta\neq\varnothing$ if and only if $\Delta$ is a polygon in $H$. Also, $D_{Y_\Delta}=D_\Delta$ by \autoref{prop:polygon-intersection-hilbert-polynomial}. 

If $a(H)>r$, then
\autoref{prop:general-vector-space-hilbert-polynomial} yields that $a\notin D_\Delta$ for any polygon $\Delta\subseteq H$, whence $K_a$ has only the empty face. 

Suppose $a(H)\leq r$. We prove that $K_a$ is contractible, whence connected and acyclic.
Choose
$0<\varepsilon<1/|H|$ and set
\[
    C_{a,\varepsilon}
    \coloneqq a+\varepsilon\mathbf1
    +\Omega_{R-a(H)-\varepsilon|H|},
\]
where 
\[
    \Omega_t=\{x\in\BR_{\geq0}^H\mid x(H)=t\} \quad \text{for each } t\ge0.
\]
Note that $C_{a,\varepsilon}$ is a nonempty closed convex subset of
$\operatorname{relint}(\Omega_R)$. 

For each $v\in H$, set $U_{v,a}\coloneqq
\bP_v\cap C_{a,\varepsilon}$. By
\cite[Thm.~5.2]{Chow.class}, we have
 \[
\Omega_R=\bigcup_{v\in H}\bP_v.
\] 
Hence, the sets $U_{v,a}$ cover
$C_{a,\varepsilon}$. For each nonempty subset $\Delta\subseteq H$, we have
\begin{equation}\label{eq:cover-intersections}
    \bigcap_{v\in\Delta}U_{v,a}
    =\bigg(\bigcap_{v\in\Delta}\bP_v\bigg)\cap C_{a,\varepsilon}.
\end{equation}
If the intersection in \eqref{eq:cover-intersections} is nonempty, then it meets $\operatorname{relint}(\Omega_R)$. Then $\Delta$ is a polygon by 
\autoref{P_Delta-gives-Delta}, and 
\[ 
\bigcap_{v\in\Delta}\bP_v=\bP_\Delta,
\] 
by \autoref{lem:intersection-polygon}. Using \autoref{prop:interior-criterion} now, we get that the right-hand side of
\eqref{eq:cover-intersections} is nonempty exactly when $\Delta$ is a polygon and 
$a\in D_\Delta$. Thus the nerve of the cover
$(U_{v,a})_{v\in H}$ is precisely $K_a$.

The $U_{v,a}$ are closed convex sets, as are all their nonempty finite
intersections. By Nerve Theorem for finite closed convex covers
\cite[Thm.~3.1, p.~17]{BauerKerberRollRolle_2023},  we have that
$K_a$ is homotopically equivalent to $ C_{a,\varepsilon}$. Since $C_{a,\varepsilon}$ is nonempty and
convex, $K_a$ is contractible. 

Consequently, $K_a$ has a nonempty face exactly when $a(H)\leq r$, in which case $K_a$ is acyclic. We may now apply
\autoref{prop:normal-crossings-euler-cancellation}, obtaining \[
 P_X(\Bn)=\sum_{\substack{a\in \bigcup_{v\in H} D_{Y_v}}}
 B_a(\Bn).
\]
Then, by \cite[Thm 5.2]{Chow.class}, we have \[
P_X(\Bn)=\sum_{\substack{a\in\BZ_{\geq0}^H\\a(H)\leq r}}
B_a(\Bn).
\] 

Put $N\coloneqq\sum_{u\in H}n_u$. From Multivariate Chu--Vandermonde identity \cite[Thm.~5.2, p.~63]{Vignat_Moll_2015}, it follows that
\[
\sum_{\substack{a\in\BZ_{\geq0}^H\\a(H)=s}} B_a(\Bn)  =\binom{N+s-1}{s}.
\] 
Summing over $0\leq s\leq r$ and using the hockey-stick identity gives
\[
    P_X(\Bn)=\sum_{s=0}^r\binom{N+s-1}{s} =\binom{N+r}{r},
\]
finishing the proof. 
\end{proof}

\bibliographystyle{amsalpha}
\bibliography{ref}

\end{document}